\documentclass{article}
%
%

\usepackage{amsmath}
\usepackage{amssymb,fullpage}

\usepackage{graphicx}
\usepackage{subfigure}
\usepackage{color}
\usepackage{enumerate}
\usepackage{cite}
\usepackage{graphicx}
\usepackage{multirow}
\usepackage{xr}

\newcommand{\bea}{\begin{eqnarray}}
\newcommand{\eea}{\end{eqnarray}}
\newcommand{\beas}{\begin{eqnarray*}}
\newcommand{\eeas}{\end{eqnarray*}}
\newcommand{\leftm}{\left[\begin{array}}
\newcommand{\rightm}{\end{array}\right]}
\newcommand{\reals}{\mbox{$\mathbb R$}}
\newcommand{\symm}{\mbox{$\mathbb S$}}

\newcommand{\ones}{\textbf{1}}
\newcommand{\zeros}{\textbf{0}}

\def\mL{{\mbox{$\mathcal{L}$}}}
\def\mA{{\mbox{$\mathcal{A}$}}}


\newcommand{\minimize}{\text{minimize}}
\newcommand{\st}{\text{subject to}}
\newcommand{\tr}{\textbf{Tr}}
\newcommand{\diag}{\textbf{diag}}
\newcommand{\Diag}{\textbf{Diag}}
\newcommand{\mb}{\mathbf}
\newcommand{\rank}{\textbf{rank}}
\newcommand{\sign}{\textbf{sign}}
\newcommand{\blkdiag}{\textbf{blkdiag}}
\newcommand{\ceil}[1]{\left \lceil{#1}\ \right \rceil }

\newcommand{\eg}{\textit{e.g.}}
\newcommand{\ie}{\textit{i.e.}}

\newtheorem{thm}{Theorem}[section]
\newtheorem{cor}[thm]{Corollary}

\newtheorem{lem}[thm]{Lemma}
\newtheorem{definition}[thm]{Definition}

\newtheorem{Assumption}[thm]{Assumption}

\newenvironment{proof}{\paragraph*{Proof:}}{\hfill$\square$}

\newcommand*\samethanks[1][\value{footnote}]{\footnotemark[#1]}

\usepackage{algorithm,algpseudocode}
\algnewcommand{\Inputs}{%
	\State \textbf{Inputs:}
}
\algnewcommand{\Initialize}{%
	\State \textbf{Initialize:}
}
\algnewcommand{\Outputs}{%
	\State \textbf{Outputs:}
}
\begin{document}
%

%

\title{ADMM for combinatorial graph problems}

\author{ Chuangchuang Sun\thanks{This work was done at Technicolor Research in the summer of 2017.}, Yifan Sun\samethanks, Ran Dai }


\maketitle

\begin{abstract}
  We investigate a class of general combinatorial graph problems, including MAX-CUT and community detection, reformulated as quadratic objectives over nonconvex constraints and solved via the alternating direction method of multipliers (ADMM).
  We propose two reformulations: one using vector variables and a binary constraint, and the other further reformulating the Burer-Monteiro form for simpler subproblems.
  Despite the nonconvex constraint, we prove the ADMM iterates converge to a stationary point in both formulations, under mild assumptions.
  Additionally, recent work suggests that in this latter form, when the matrix factors are wide enough, local optimum with high probability is also the global optimum.
  To demonstrate the scalability of our algorithm, we include results for MAX-CUT, community detection, and image segmentation  benchmark and simulated examples.

\end{abstract}

\section{INTRODUCTION} 

A number of important  problems arising in graph applications can be formulated as a linear binary optimization problem, for different choices of $C$:
\begin{equation}
\begin{array}{ll}
\minimize & x^TCx\\
\st & x \in \{+1,-1\}^n\\
\end{array}
\label{eq:main}
\end{equation}
Here, $C$ is an $n\times n$ symmetric matrix, where $n$ is the number of nodes in an undirected graph.
For different choices of $C$, problem \eqref{eq:main} may correspond to the   MAX-CUT problem in combinatorics, the 2-community detection problem in machine learning, or many other  combinatorial  graph problems.

Exactly solving combinatorial problems of this nature is a very difficult problem (it is in general NP-Hard), and even the best branch-and-bound methods do not scale well beyond a few thousands of nodes.

\subsection{Notation}
Define $\symm^n$ the space of $n\times n$ symmetric matrices.
For an undirected graph with $n$ nodes, we define $A\in \symm^n$ the adjacency matrix of the graph, where $A_{ij} >0 $ is the strictly positive weight of the edge between nodes $i$ and $j$,  and $A_{ij} = 0$ if there is no edge between $i$ and $j$. Define $\mb 1$ as the vector of all 1's.

\subsection{Community detection}
The problem of community detection is to identify node clusters in undirected graphs that are more likely to be connected with each other than with nodes outside the cluster.
This prediction  is useful in many graphical settings, such as interpreting online communities through social network or linking behavior, interpreting molecular or neuronal structure in biology or chemistry,  finding disease sources in epidemiology, and many more.
There are many varieties and methodologies in this field, and it would be impossible to list them all, though a very comprehensive overview is given in \cite{fortunato2016community}.

The stochastic binary model \cite{holland1983stochastic} is one of the simplest generative models for this application. Given a graph with $n$ nodes and parameters $0 < q < p < 1$, the model partitions the nodes into two communities, and generates an edge between nodes in a community with probability $p$ and nodes in two different communities with probability $q$. Following the analysis in \cite{abbe2016exact}, we can define  $C = \frac{p + q}{2}\mb1 \mb1^T - A$, and the solution  to \eqref{eq:main} gives a solution to the community detection problem with sharp recovery guarantees.



\subsection{MAX-CUT}
While the community detection problem aims to find two densely connected clusters in a graph, the MAX-CUT problem attempts to find two clusters with dense interconnections.
For an undirected graph with $n$ nodes and (possibly weighted) edges between some pairs of nodes, a \textit{cut} is defined as a partition of the $n$ nodes, and the \textit{cut value} is the sum of the weights of the severed edges. The MAX-CUT problem is to find the cut in a given graph that gives the maximum cut value.
When $C = (A - \diag(A\mb 1))/4$, the solution to \eqref{eq:main} is exactly the maximum cut.

Many methods exist to find the MAX-CUT of a graph.
In general, combinatorial methods can be solved using branch-and-bound schemes, using a linear relaxation of \eqref{eq:main} as a bound  \cite{barahona1988application,de1995exact}, where the binary constraint is replaced with $0 \leq (x+1)/2 \leq 1$. Historically, these ``polyhedral methods" were the main approach to find exact solutions of the MAX-CUT problem. Though this is an NP-Hard problem, if the graph is sparse enough, branch-and-bound converges quickly even for very large graphs \cite{de1995exact}. However, when the graph is not very sparse, the linear relaxation is loose, and finding efficient branching mechanisms is challenging, causing the algorithm to run slowly.

The MAX-CUT problem can also be approximated by one pass of the linear relaxation (with bound
$\frac{f_{\text{relax}}}{f_{\text{exact}}} \geq 2 \times \#$edges) \cite{poljak1994expected}.
A tighter approximation can be found with the semidefinite relaxation (see next section), which is also used for better bounding in branch-and-bound techniques \cite{helmberg1998solving, rendl2007branch, burer2008finite, bao2011semidefinite}.
In particular, the rounding algorithm of \cite{goemans1995improved} returns a feasible $\hat x$ given optimal $Z$, and is shown in expectation to satisfy $\frac{x^TCx}{\hat x^TC\hat x} \geq 0.878$. For this reason, the semidefinite relaxation for problems of type \eqref{eq:main} are heavily studied (\eg \cite{poljak1995recipe, helmberg2000semidefinite, fujie1997semidefinite}).

\subsection{Semidefinite relaxation}
To find the semidefinite relaxation of problem \eqref{eq:main}, we first reformulate it equivalently  using a change of variables $Z = xx^T$:
\begin{equation}\label{eq:UNn_MISDP}
\begin{aligned}
 &\underset{Z\in\symm^n}{\minimize}& &\tr(CZ)  \\
&\st& &\diag(Z) = 1  \\
& & &Z \succeq \textbf{0}  \\
& & &\rank(Z) \le 1.
\end{aligned}
\end{equation}
We refer to this formulation as the rank constrained semidefinite program (RCSDP). Without the rank constraint, \eqref{eq:UNn_MISDP} is the usual semidefinite (convex) relaxation used in MAX-CUT approximations; we refer to the semidefinite relaxation   as the SDR. Solving the SDR exactly can be done in polynomial time (\eg using an interior point method); however, the per-iteration complexity can be large ($O(n^6)$ for interior point method and  $O(n^3)$ for most first-order methods) limiting its scalability.

 Consequently, a reformulation based on matrix factorization has been proposed by~\cite{burer2003nonlinear,burer2005local}, which solves the SDR with a factorization $Z = RR^T$, $R\in \mathbb{R}^{n\times r}$:
\begin{equation}\label{eq:FormulationXxy0SymFact}
\begin{aligned}
&\underset{R\in \reals^{n\times r}}{\minimize}& &\tr(R^T CR)  \\
&\st& &\diag(RR^T) = \ones \\
\end{aligned}
\end{equation}
which we will call the factorized SDP (FSDP). Note that FSDP is not convex; however, because the search space is compact, if $r$ is large enough such that $r(r+1)/2 >  n$, then the global optimum of the FSDP (\ref{eq:FormulationXxy0SymFact}) is the global optimum of the SDR
~\cite{pataki1998rank,barvinok1995problems}.
Furthermore, a recent work~\cite{boumal2016non} shows that almost all local optima of FSDP are also global optima, suggesting that any stationary point of the FSDP is also a reasonable approximation of \eqref{eq:main}.

To solve (\ref{eq:FormulationXxy0SymFact}),~\cite{burer2003nonlinear} put forward an algorithm based on augmented Lagrangian method to find local optima of (\ref{eq:FormulationXxy0SymFact}); unsurprisingly, due to the later obsrvation of Boumel et. al, this method empirically found global optima almost all of the time.
However, solving \eqref{eq:FormulationXxy0SymFact} is still numerically burdensome; in the augmented Lagrangian term, the objective is quartic in $R$, and is usually solved using an iterative numerical method, such as L-BFGS.

Another way of finding a low-rank solution to the SDR is to use the nuclear norm as a surrogate for the rank penalty \cite{candes2009exact, recht2010guaranteed, udell2016generalized}. This method is popular because of its theoretical guarantees and experimental success.
A difficulty with nuclear norm minimization is in choosing the penalty parameter; in general this is done through cross-validation.
Also, when the rank function appears as a hard constraint in the optimization problem, the corresponding bound for the surrogate model is ambiguous.

\subsection{ADMM for nonconvex optimization}
We propose using the Alternagting Direction Method of Multipliers (ADMM) to solve vector and SDP-based matrix reformulations of \eqref{eq:main}, as a scalable alternative to the prior methods mentioned here.
Recently, the Alternating Direction Method of Multipliers (ADMM) \cite{gabay1976dual, glowinski1975approximation, lions1979splitting, eckstein1992douglas} has been widely applied and investigated in various fields; see the survey~\cite{boyd2011distributed} and the references therein.
Specifically, it is favored because it is easy to apply to a distributed framework, and has been shown to converge quickly in practice.
For convex optimization problems, ADMM can be shown to converge to a global optima in several ways: as a series of contractions of monotone operators \cite{eckstein1992douglas} or as the minimization of a global potential function \cite{boyd2011distributed}.

However, in general there is a lack of theoretical justification for ADMM on nonconvex problems  despite its good numerical performance.
Almost all works concerning ADMM on nonconvex problems investigate when nonconvexity is in the objective functions (\ie \cite{hong2016convergence,wang2015global,li2015global,magnusson2016convergence}, and also
\cite{lu2017nonconvex,xu2012alternating} for matrix factorization)
Under a variety of assumptions (\eg~convergence or boundedness of dual objectives)  they are shown to convergence to a KKT stationary point.
%
%
%

In comparison, relatively fewer works deal with nonconvex constraints. ~\cite{jiang2014alternating} tackles polynomial optimization problems by minimizing a general objective over a spherical constraint $\|x\|_2 = 1$,  ~\cite{huang2016consensus} solves general QCQPs, and \cite{shen2014augmented} solves the low-rank-plus-sparse matrix separation problem. In all cases, they show that all limit points are also KKT stationary points, but do not show that their algorithms will actually converge to the limit points. In this work, we investigate a class of nonconvex constrained problems, and show with much milder assumptions that the sequence always converges to a KKT stationary point.







\section{VECTOR FORM}
We begin by considering a simple reformulation of \eqref{eq:main} using only vector variables
\begin{equation}
\label{eq:Formulation3}
\begin{array}{ll}
\underset{x,y\in \reals^n}{\minimize}& x^TCx \\
\st& y \in \{-1,1\}^n\nonumber\\
    & x = y.\nonumber
    \end{array}
\end{equation}
This is similar to the nonconvex formulations proposed in \cite{boyd2011distributed}, Chapter 9.1.

To solve \eqref{eq:Formulation3} via ADMM, we first form the augmented Lagrangian
\bea\label{eq:Formulation4}
\mL(x,y;\mu) = x^TCx + \langle {\mu},x-y\rangle + \frac{\rho}{2}\|x-y\|_F^2,
\eea
where $\mu\in\reals^{n}$ is the corresponding dual variable and $\rho > 0$ is the weighting factor associated with the penalty term.
Under the ADMM framework, problem in (\ref{eq:Formulation4}) can be solved by alternating between the two primal variables $x$ and $y$, and then updating the dual variable $\mu$. The ADMM for \eqref{eq:Formulation3} is summarized in Algorithm \ref{a:vector}.
Note that unlike typical ADMM, the penalty parameter $\rho$ in Algorithm \ref{a:vector} is increasing with the iteration index, which will facilitate the convergence.

\begin{algorithm}
	\caption{ADMM for solving \eqref{eq:Formulation3}}
	\begin{algorithmic}[1]
		\Inputs{$C\in \symm^n$, $\rho_0>0$, $\alpha>1$, tol $\epsilon > 0$}
		\Outputs {Local optimum $(x,y)$ of (\ref{eq:Formulation3})}
		\Initialize{$x^0, y^0;\mu^0$ as random vectors}
		
		\For{$k = 1 \hdots$}

		\State Update $y^k$ as the solution to
		
		\begin{equation}\label{eq:z1z2}
		\begin{array}{ll}
		\underset{y}{\min} & \|x^{k}-y+\frac{\mu^{k}}{\rho^{k}}\|_F^2 \\
		\text{s.t.} &  y \in \{-1,1\}^n
		\end{array}
		\end{equation}
		
		\State Update ${x^k}$ the minimizer of
			\begin{equation}\label{eq:xnew}
			x^TCx +  \frac{\rho^{k}}{2}\|x-y^{k+1} + \frac{\mu^k}{\rho^{k}}\|_F^2
			\end{equation}
		\State Update $\mu$ via
			\bea\label{eq:LambdaUpd}
			\mu^{k+1} &=& \mu^{k} + \rho^{k}({x}^{k+1} - {y}^{k+1})\nonumber\\
			\rho^{k+1} &=& \alpha\rho^k
			\eea

		\If{$\|x^{k}-y^k\| \leq \epsilon$}
		\State {\textbf{break}}
		\EndIf
		\EndFor
	\end{algorithmic}
	\label{a:vector}
\end{algorithm}



\subsection{Solution for Subproblems in ADMM}

For every step $k$ in ADMM of Algorithm  \ref{a:matrix}, the updating procedure is composed of three subproblems, expressed in (\ref{eq:z1z2})-(\ref{eq:LambdaUpd}).
The update for $y$ in \eqref{eq:z1z2} is just rounding
\[
y_i^{k+1} = \sign({x}^{k}+\mu^{k}/\rho^k)_i,
\]
and is computationally cheap.
However, updating $x$ is more cumbersome. The update of $x$ is the solution to a linear system
\bea\label{eq:OptCond1}
2Cx + \mu^k + \rho^k(x-y^{k+1}) = 0.
\eea
and
$x^k = (\rho^kI + 2C)^{-1}(-\mu^k + \rho^k y^{k+1})$.
If $\rho^k=\rho$ is constant, then one can avoid inverting at each iteration by storing $(\rho^kI + 2C)^{-1}$ and using matrix multiplication at each step.
For changing $\rho^k$, one can store the eigenvalue decomposition of $C = U_C\Lambda_C U_C^T$, and compute for each $\rho^k$
\[
(\rho^kI + 2C)^{-1} = U_C(2\Lambda_C + \rho^kI)^{-1}U_C^T.
\]
Although this formulation seems simple, when $n$ is large, the initial eigenvalue decomposition or matrix inverse can incur significant overhead.
Still, despite these limitations, this vector-based method can be shown to converge under very mild conditions.

\subsection{Convergence Analysis}
We will now show that Algorithm \ref{a:vector} converges under very mild conditions; specifically,  the augmented Lagrangian monotonically decreases.


\begin{definition}\label{def:L}
$x^TCx$ is $L_1-$smooth function, that is, any $x_1, x_2$, $\| 2Cx_1 - 2Cx_2\| \le L_1\|x_1 - x_2\|$.\\
\end{definition}

\begin{definition}\label{Definition:L_H}
$L_H\in\reals_{+}$ is the smallest number such that $-L_HI \preceq 2C$, where $2C$ is the Hessian of the objective function of \eqref{eq:main}. In other words, the Hessian is lower bounded.
\end{definition}

\begin{Assumption}\label{Assumption:Df}
The value $x^TCx$ is lower bounded over $x\in \{-1,1\}^n$.
\end{Assumption}

\begin{thm}\label{lemma:LagDecent}
Given the sequence $\{\rho^k\}$ such that
\[
(\rho^k)^2-L_H\rho^k - (\alpha+1)L_1^2 > 0,\rho^k > L_H, \alpha > 0
\]
under Algorithm \ref{a:vector} the augmented Lagrangian monotonically decreases
\[
\mathcal{L}(y^{k+1},x^{k+1};\mu^{k+1}) - \mathcal{L}(y^{k},x^{k};\mu^{k}) \le - c_1^k\|x^{k+1} - x^{k}\|_F^2
\]
with $c_1^k = \frac{\rho^k-L_H}{2} - \frac{(\alpha+1)L_1^2}{2\rho^{k}} >0$.
With Assumption \ref{Assumption:Df} and $\rho^k > L_1$, the augmented Lagrangian  $\mL({y}^{k},{x}^k;{\mu}^k)$ is lower bounded and convergent.
Therefore, $\{{y}^k,{x}^k,{\mu}^k\}$ converge to $\{{y}^*,{x}^*,{\mu}^*\}$ a KKT solution of \eqref{eq:Formulation3} and
${x}^* = {y}^*$.
\end{thm}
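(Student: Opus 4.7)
My plan is to establish in turn: (i) a per-iteration descent $\mc{L}(y^{k+1},x^{k+1};\mu^{k+1}) - \mc{L}(y^k,x^k;\mu^k) \le -c_1^k\|x^{k+1}-x^k\|^2$, (ii) a lower bound on $\mc{L}$, and (iii) convergence of the iterates to a KKT point. For (i), I would decompose the total change into three pieces, one per update. The $y$-step is non-positive because $y^{k+1}$ globally minimizes $\mc{L}(y,x^k;\mu^k)$ over $\{-1,1\}^n$. The $x$-subproblem \eqref{eq:xnew} has Hessian $2C+\rho^k I \succeq (\rho^k-L_H)I$ by Definition~\ref{Definition:L_H}, so the optimality of $x^{k+1}$ yields, via strong convexity, $\mc{L}(y^{k+1},x^{k+1};\mu^k)-\mc{L}(y^{k+1},x^k;\mu^k)\le -\tfrac{\rho^k-L_H}{2}\|x^{k+1}-x^k\|^2$. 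The third piece---the combined $\mu$- and $\rho$-update---equals
\[
\langle\mu^{k+1}-\mu^k,\,x^{k+1}-y^{k+1}\rangle + \tfrac{\rho^{k+1}-\rho^k}{2}\|x^{k+1}-y^{k+1}\|^2,
\]
and substituting $\mu^{k+1}-\mu^k = \rho^k(x^{k+1}-y^{k+1})$ and $\rho^{k+1}=\alpha\rho^k$ collapses this to $\tfrac{\alpha+1}{2\rho^k}\|\mu^{k+1}-\mu^k\|^2$.

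The dual increment is controlled by the primal increment through the first-order condition \eqref{eq:OptCond1}, which gives $\mu^{k+1} = -2Cx^{k+1}$, whence $\mu^{k+1}-\mu^k = -2C(x^{k+1}-x^k)$ and $\|\mu^{k+1}-\mu^k\|\le L_1\|x^{k+1}-x^k\|$ by Definition~\ref{def:L}. Combining the three pieces yields the claimed descent with $c_1^k = \tfrac{(\rho^k)^2 - L_H\rho^k - (\alpha+1)L_1^2}{2\rho^k} > 0$. For the lower bound I would again use $\mu^k = -2Cx^k$ to rewrite
\[
\mc{L}(y^k,x^k;\mu^k) = (y^k)^T C y^k - (x^k-y^k)^T C(x^k-y^k) + \tfrac{\rho^k}{2}\|x^k-y^k\|^2;
\]
since $-L_1 I \preceq 2C \preceq L_1 I$, the sum of the last two terms is non-negative whenever $\rho^k > L_1$, so $\mc{L}(y^k,x^k;\mu^k) \ge (y^k)^T C y^k$, which is bounded below by Assumption~\ref{Assumption:Df}. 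Monotone descent plus a lower bound gives convergence of $\mc{L}$, and telescoping then forces $\sum_k c_1^k\|x^{k+1}-x^k\|^2 < \infty$; since $c_1^k$ in fact grows with $\rho^k$, this implies $\|x^{k+1}-x^k\|\to 0$.

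From here, $\|\mu^{k+1}-\mu^k\|\to 0$ by smoothness and $\|x^{k+1}-y^{k+1}\| = \tfrac{1}{\rho^k}\|\mu^{k+1}-\mu^k\|\to 0$. Because $\{y^k\}\subset\{-1,1\}^n$ is a finite set whose elements are pairwise at distance $\ge 2$, while $\|y^{k+1}-y^k\| \le \|y^{k+1}-x^{k+1}\| + \|x^{k+1}-x^k\| + \|x^k-y^k\|\to 0$, the sequence $y^k$ is eventually constant at some $y^*$. Then $x^k\to y^*$, $\mu^k\to -2Cy^*$, and taking limits in the $y$- and $x$-optimality conditions verifies the KKT conditions of \eqref{eq:Formulation3} with $x^*=y^*$. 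The main obstacle I anticipate is the bookkeeping in piece (i): the penalty $\rho^k$ itself varies, so one must carefully pair each Lagrangian with the correct $\rho$, and the combined $(\mu,\rho)$-step only collapses to the clean form $\tfrac{\alpha+1}{2\rho^k}\|\mu^{k+1}-\mu^k\|^2$ after using $\mu^{k+1}-\mu^k=\rho^k(x^{k+1}-y^{k+1})$---this is precisely what allows $c_1^k$ to line up with the stated condition $(\rho^k)^2 - L_H\rho^k - (\alpha+1)L_1^2 > 0$.
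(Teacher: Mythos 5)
Your proposal is correct and follows essentially the same route as the paper: the same three-piece decomposition of the Lagrangian change, the same key identity $\mu^{k+1}=-2Cx^{k+1}$ to bound the dual increment by $L_1\|x^{k+1}-x^k\|$, the same strong-convexity bound $2C+\rho^k I\succeq(\rho^k-L_H)I$ for the $x$-step, and the same use of $\mu^k=-\nabla f(x^k)$ to lower-bound $\mc{L}$ by $f(y^k)$ when $\rho^k>L_1$. Your two small refinements---arguing the lower bound via the explicit quadratic identity rather than the descent lemma, and using the discreteness of $\{-1,1\}^n$ to conclude that $y^k$ is eventually constant---are welcome tightenings but do not change the argument.
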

\begin{proof}
See section \ref{sec:vectorconvergence} in the appendix.
\end{proof}

\textit{Remark}: Convergence is also guaranteed under a constant penalty coefficient $\rho_k \equiv \rho^0$ ($\alpha=1$) satisfying $(\rho^0)^2-L_H\rho^0 - 2L_1^2 > 0$.
However, in implementation, we find empirically that increasing $\{\rho^k\}$ from a relatively small $\rho^0$  can encourage convergence to more useful global minima.

\begin{cor}\label{thm:Convergence}
With the prerequisites in Theorem \ref{lemma:LagDecent} satisfied and $(y^*,x^*;\mu^*)$ as the accumulation point, we have that\\
\begin{enumerate}
	\item[i.]$\mu_i^*x_i^* \ge -\rho^*$ with $x^*\in\{-1,1\}^n$.
	\item[ii.] ${x}^*$ is a stationary point of the optimization problem $\min_{{x}}x^TCx+({\mu}^*)^T{x}$.
\end{enumerate}
 In particular for a convex function objective $f(x) = x^TCx$ ($C \succeq 0$), ${\mu}^* = 0$ indicates that the constraint $x\in\{-1,1\}^n$ is automatically satisfied; that is, the minimizer of $f(x)$ is the exact minimizer of \eqref{eq:main}.


\end{cor}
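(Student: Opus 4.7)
The plan is to derive both parts of the Corollary directly from the first-order optimality conditions of the ADMM subproblems, and then to pass to the limit using the convergence guarantee of Theorem \ref{lemma:LagDecent} (in particular the equality $x^* = y^*$ at the accumulation point).

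For part (i), I would start from the closed-form solution of the $y$-subproblem \eqref{eq:z1z2}, namely $y^{k+1}_i = \sign(x^k_i + \mu^k_i/\rho^k)$. This already gives $y^k \in \{-1,1\}^n$ for every $k$, so at the limit $x^* = y^* \in \{-1,1\}^n$. The bound $\mu^*_i x^*_i \geq -\rho^*$ then follows by rearranging the sign relation $y^{k+1}_i(x^k_i + \mu^k_i/\rho^k) \geq 0$ into $\mu^k_i y^{k+1}_i \geq -\rho^k x^k_i y^{k+1}_i$ and observing that, at the limit, $x^*_i y^*_i = (y^*_i)^2 = 1$.

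For part (ii), I would take the $x$-subproblem stationarity equation \eqref{eq:OptCond1}, $2Cx^{k+1} + \mu^k + \rho^k(x^{k+1}-y^{k+1}) = 0$, and simplify it with the dual update $\mu^{k+1} = \mu^k + \rho^k(x^{k+1}-y^{k+1})$. This collapses \eqref{eq:OptCond1} into the clean identity $2Cx^{k+1} + \mu^{k+1} = 0$, and passing to the limit gives $2Cx^* + \mu^* = 0$, which is precisely the stationarity condition of $\min_x x^T C x + (\mu^*)^T x$. The ``in particular'' consequence is then immediate: when $C \succeq 0$ the objective $f(x) = x^T C x$ is convex with global minimum $0$, so $\mu^* = 0$ combined with part (ii) yields $Cx^* = 0$ and $f(x^*) = 0$; together with $x^* \in \{-1,1\}^n$ from part (i), $x^*$ is feasible for \eqref{eq:main} and achieves the smallest possible objective value, hence is globally optimal.

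I do not expect a serious obstacle here: each claim is essentially a one-line consequence of an ADMM subproblem's first-order condition combined with $x^* = y^*$. The only interpretive subtlety is what $\rho^*$ means when $\alpha > 1$ drives $\rho^k \to \infty$; in that regime the inequality in (i) becomes vacuous and the bound is only substantively informative under the constant-$\rho$ setting described in the Remark after Theorem \ref{lemma:LagDecent}.
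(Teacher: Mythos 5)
Your proposal is correct and follows essentially the same route as the paper: part (i) comes from the sign/projection characterization of the $y$-update at the limit, part (ii) from the $x$-subproblem stationarity condition collapsed via the dual update to $2Cx^*+\mu^*=0$, and the convex case from noting $\mu^*=0$ makes $x^*$ an unconstrained minimizer that happens to be feasible. Your version is marginally more explicit in the convex case (computing $f(x^*)=0$ directly) and in flagging that the bound in (i) is vacuous when $\rho^k\to\infty$, but these are presentational refinements rather than a different argument.
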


\begin{proof}
See section \ref{sec:vectorconvergence} in the appendix.
\end{proof}

\section{PSD MATRIX FORM}

We now present our second reformulation of  \eqref{eq:main}, using ideas from the SDR and FSDP:
\begin{equation}
\label{eq:FormulationXxy1}
\begin{array}{cl}
\underset{X,Y , Z}{\minimize} & \tr(CZ)\\
\st & \diag(Z) = \mb 1\\
& Z = XY^T\\
& X = Y.
\end{array}
\end{equation}
Here the  matrix variables are $Z\in \symm^n$, and $X$, $Y\in \reals^{n\times r}$. The extra variable $Y$ and the constraint $X = Y$ are introduced to transform the quadratic constraint in \eqref{eq:FormulationXxy0SymFact} to  bilinear, which will simplify the subproblems significantly.
Two cases of $r$ are of interest.
When  $r = 1$, (\ref{eq:FormulationXxy1}) is  equivalent to  (\ref{eq:main}), with $X = x$. When, $r = \ceil {\sqrt{2n}}$, then $\frac{r(r+1)}{2} > n$ and the global optimum of (\ref{eq:FormulationXxy1}) is with high probability that of the SDR, based on the results of \cite{boumal2016non}.


To solve (\ref{eq:FormulationXxy1}) via ADMM, we  build the augmented Lagrangian function as
\begin{align}\label{eq:AugLag1}
\mL(Z,X,Y;&\Lambda_1,\Lambda_2) = \tr(CZ)  + \langle \Lambda_2,X-Y\rangle \nonumber\\
& +\langle \Lambda_1,Z - XY^T\rangle + \frac{\rho}{2}\|X-Y\|_F^2 \nonumber\\
  &+ \frac{\rho}{2}\|Z - XY^T\|_F^2
\end{align}
where $\Lambda_1\in\reals^{n\times n}$ and $\Lambda_2\in\reals^{n\times r}$ are the dual variables corresponding to the two coupling constraints, respectively. Similar as to the vector case, problem  (\ref{eq:FormulationXxy1}) is solved by alternating minimization between two primal variable sets, $Y$ and $(Z,X)$, and then updating the dual variable $\Lambda_1$ and $\Lambda_2$. This sequence is summarized in  Algorithm \ref{a:matrix} below.

\begin{algorithm}
	\caption{ADMM for solving \eqref{eq:FormulationXxy1}}
	\begin{algorithmic}[1]
		\Inputs{$C\in \symm^n$, $\rho_0>0$, $\alpha>1$, tol $\epsilon > 0$}
		\Initialize{$Z^0, X^0;\Lambda_1^0,\Lambda_2^0$ as random matrices}
		\Outputs {$Z$, $X=Y$}
		\For{$k = 1 \hdots$}
		\State Update ${Y^{k+1}}$ the minimizer of
			\begin{equation}\label{eq:xnewXxy}
			\|Z^{k} - X^{k}Y^T + \frac{\Lambda_1^k}{\rho^{k}}\|_F^2
			+ \|X^{k}-Y + \frac{\Lambda_2^k}{\rho^k}\|_F^2
			\end{equation}
		\State Update $(Z,X)^{k+1}$ as the solutions of
			\begin{equation}\label{eq:z1z2Xxy}
			\begin{array}{ll}
			\underset{X,Z}{\min} & \mL(Z,X,Y^{k+1};\Lambda_1^{k},\Lambda_2^{k};\rho^k) \\
			\text{s.t.} &  \diag(Z) = \mb 1
			\end{array}
			\end{equation}
			where $\mL$ is as defined in \eqref{eq:AugLag1}
		\State Update $\Lambda_1, \Lambda_2$ and $\rho$ via
			\bea\label{eq:LambdaUpdXxy}
			\Lambda_1^{k+1} &=& \Lambda_1^{k} + \rho^k({Z}^{k+1} - {X}^{k+1}({Y}^{k+1})^T)\nonumber\\
			\Lambda_2^{k+1} &=& \Lambda_2^{k} + \rho^k({X}^{k+1} - {Y}^{k+1})\nonumber\\
			\rho^{k+1} &=& \alpha\rho^{k}
			\eea
		\If{$\max \{\|X^{k}-Y^k\| ,\|Z^{k}-X^k(Y^k)^T\|\} \leq \epsilon$}
		\State {\textbf{break}}
		\EndIf
		\EndFor
	\end{algorithmic}
\label{a:matrix}
\end{algorithm}


\subsection{Solution for Subproblems in ADMM}



For notation ease, we omit the iteration index $k$. To update $Y$ in (\ref{eq:z1z2Xxy}),
we solve the first order optimality condition, which reduces to the following linear system:
\[Y = \left(\frac{1}{\rho}(\Lambda_1^TX+\Lambda_2) + Z^TX + X\right)(I+X^TX)^{-1}.\]
For $r = \ceil {\sqrt{2n}}$, the size of the matrix to be inverted is of the order $O(\sqrt{n})$; for $r = 1$, it is a scalar and inversion is trivial.

To update $(Z,X)$ in (\ref{eq:z1z2Xxy}), we similarly need to solve an equality-constrained quadratic problem. Define the linear map $\mathcal{A}(Z) = \diag(Z)$ selecting the diagonal of a symmetric matrix, and its adjoint operator $\mathcal{A}^*(\nu) = \Diag(\nu)$ producing a diagonal matrix from a vector. Then the optimality conditions of (\ref{eq:z1z2Xxy}) are

\begin{equation*}
\begin{array}{rcl}
 C - \mA^*(\nu) + \Lambda_1 + \rho(Z-XY^T) &=& 0\\
 \Lambda_2 -\Lambda_1Y + \rho(XY^T - Z)Y + \rho(X-Y) &=& 0\\
\mA(X) &=& \ones,
\end{array}
\end{equation*}
where $\nu\in\reals^m$  is the dual variable for the local constraint $\mA(X) = b$.

Given $Y$, solving for $\nu$ reduces to solving
\[
G\nu = \rho(b-\mA(DY^T)) + \mA[(C+\Lambda_1)(I + YY^T)]
\]
where
\[
D =  \frac{1}{\rho}(\Lambda_1Y -\Lambda_2) + Y , \quad
G = \mA^*(\mA(I + YY^T)).
\]
Since  $G$ is a diagonal positive definite matrix then finding $G^{-1}$ is computationally simple.
Then
\[
X = BY + D, \quad \text{ and } \quad
Z = XY^T + B,
\]
where
\[
B = -\frac{1}{\rho}(C-\mA^*(\nu) + \Lambda_1).
\]
Note that the complexity of all inversions are $O(r^{3})$, and are particularly simple if $r = 1$. In that case, the complexity is dominated by the matrix multiplications.




\subsection{Convergence Analysis}


\begin{Assumption}\label{Assumption:BoundedDualXxy}
The sequences $\{\tr(CZ^k)\}$ and $\{\Lambda_1^k,\Lambda_2^k\}$ are bounded in norm.
\end{Assumption}

\begin{thm}\label{lemma:LagDecentXxy}
If Assumption \ref{Assumption:BoundedDualXxy} holds, and given a sequence $\{\rho^k\}$ such that
\[
\rho^k > 0, \sum\frac{\rho^{k+1}}{(\rho^k)^2} < \infty, \text{ and } \sum\frac{1}{\rho^k} < \infty,
\]
then
 $\{Z^k, X^k, Y^k\}$ generated by Algorithm  \ref{a:matrix} will globally converge to a stationary point of (\ref{eq:FormulationXxy1}).
\end{thm}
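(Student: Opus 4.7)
The plan is to leverage the two summability conditions on $\{\rho^k\}$ together with the dual boundedness of Assumption~\ref{Assumption:BoundedDualXxy} to show that the constraint residuals vanish fast, the primal iterates are uniformly bounded and Cauchy, and the common limit satisfies KKT for \eqref{eq:FormulationXxy1}. The first step is to read off vanishing feasibility directly from the dual update \eqref{eq:LambdaUpdXxy}:
\[
\|Z^{k+1}-X^{k+1}(Y^{k+1})^T\|_F = \tfrac{1}{\rho^k}\|\Lambda_1^{k+1}-\Lambda_1^k\|_F,\qquad \|X^{k+1}-Y^{k+1}\|_F = \tfrac{1}{\rho^k}\|\Lambda_2^{k+1}-\Lambda_2^k\|_F,
\]
so Assumption~\ref{Assumption:BoundedDualXxy} and $\sum 1/\rho^k<\infty$ make both residuals summable and vanishing. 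Uniform boundedness of the primal iterates then follows by exploiting the enforced constraint $\diag(Z^k)=\mathbf 1$: taking diagonals in $Z^{k+1}-X^{k+1}(Y^{k+1})^T$ yields $\langle X_i^{k+1},Y_i^{k+1}\rangle=1+O(1/\rho^k)$ rowwise, which combined with $\|X_i^{k+1}-Y_i^{k+1}\|=O(1/\rho^k)$ gives a quadratic inequality for $\|X_i^{k+1}\|$ whose positive root is $1+O(1/\rho^k)$; hence $\{X^k\}$ and $\{Y^k\}$ are eventually bounded, and $\{Z^k\}$ is bounded from the bilinear identity.

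The second step is to show that the primal iterates are Cauchy. Writing the closed-form first-order optimality of the $Y$-subproblem,
\[
Y^{k+1} = \bigl((Z^k)^T X^k + X^k + \tfrac{1}{\rho^k}\bigl((\Lambda_1^k)^T X^k + \Lambda_2^k\bigr)\bigr)\bigl(I+(X^k)^T X^k\bigr)^{-1},
\]
and the analogous closed forms from \eqref{eq:z1z2Xxy} for $X^{k+1}$ and $Z^{k+1}$, I would bound $\|Y^{k+1}-Y^k\|$, $\|X^{k+1}-X^k\|$, $\|Z^{k+1}-Z^k\|$ by three kinds of controllable quantities: the feasibility residuals of order $1/\rho^{k-1}$ from the first step; terms of order $\rho^{k-1}/(\rho^k)^2$ that arise when the dual increments $\|\Lambda_j^k-\Lambda_j^{k-1}\| = \rho^{k-1}\cdot(\text{residual})$ are fed through the $1/\rho^k$ prefactor in the next update; and a parameter-change term of order $|1/\rho^k-1/\rho^{k-1}|\le 1/\rho^{k-1}$. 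The conditions $\sum 1/\rho^k<\infty$ and $\sum \rho^{k+1}/(\rho^k)^2<\infty$ are tailored so that summing these estimates yields $\sum\|Y^{k+1}-Y^k\|<\infty$ and likewise for $X$ and $Z$. The sequences are therefore Cauchy with common limits $(X^\star,Y^\star,Z^\star)$, and together with the dual boundedness any subsequential limits $(\Lambda_1^\star,\Lambda_2^\star)$ give $X^\star=Y^\star$, $Z^\star=X^\star(X^\star)^T$, $\diag(Z^\star)=\mathbf 1$. Passing to the limit in the first-order systems of the two subproblems (already written in the excerpt, with multiplier $\nu$ for $\diag(Z)=\mathbf 1$) recovers the full KKT system for \eqref{eq:FormulationXxy1}.

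The main obstacle is this second step. Unlike Theorem~\ref{lemma:LagDecent} for the vector form, the augmented Lagrangian $\mL$ of \eqref{eq:AugLag1} is not obviously monotone because the coupling $Z=XY^T$ is bilinear and neither subproblem alone descends the joint potential; the standard \emph{descent plus lower-bound} template therefore does not apply directly. The two summability conditions on $\{\rho^k\}$ are precisely what is needed in the bookkeeping: $\sum 1/\rho^k$ controls direct residual propagation, while $\sum \rho^{k+1}/(\rho^k)^2$ absorbs cross-terms that appear when dual differences from step $k$ re-enter the $(k+1)$-st subproblem scaled by $1/\rho^{k+1}$. Making these estimates close requires using the uniform bound from the first step to keep $(I+(X^k)^T X^k)^{-1}$ well-conditioned, and to carry a uniform Lipschitz constant for the nonlinear maps defining the bilinear updates.
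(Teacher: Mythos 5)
Your proposal reaches the same conclusion but by a genuinely different route. The paper's proof is a Lyapunov argument on the augmented Lagrangian \eqref{eq:AugLag1}: Lemma \ref{lem:hessianbound} shows each block subproblem is strongly convex with modulus proportional to $\rho^k$ (the $(X,Z)$ block via a Schur-complement computation), so each block minimization decreases $\mL$ by at least a constant times the squared iterate change, while the dual/penalty update increases $\mL$ by at most $\frac{\rho^{k+1}+\rho^k}{2(\rho^k)^2}(\|\Delta\Lambda_1\|_F^2+\|\Delta\Lambda_2\|_F^2)$, which is summable under Assumption \ref{Assumption:BoundedDualXxy} and $\sum\rho^{k+1}/(\rho^k)^2<\infty$; telescoping against the lower bound on $\mL$ forces the squared primal increments to be summable, and $\sum 1/\rho^k<\infty$ then kills the residuals. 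Your route instead analyzes the closed-form update maps directly, using dual boundedness and $\rho^k\to\infty$ to show the iterates are near-feasible and bounded (your diagonal-constraint argument for boundedness of $X^k,Y^k$ is correct and is something the paper never establishes), and then that each update is a summable perturbation of the identity, giving a Cauchy sequence. This buys you something the paper's proof does not explicitly deliver: summability of $\|Y^{k+1}-Y^k\|$ itself (not just of the squares), hence genuine convergence of the whole sequence rather than of increments. Two caveats. First, your stated reason for abandoning the descent template is wrong: exact minimization over a block always decreases the joint $\mL$, and for fixed $Y$ the function is jointly strongly convex in $(X,Z)$ despite the bilinear coupling --- that is exactly Lemma \ref{lem:hessianbound}, so the \emph{descent plus lower-bound} argument does apply and is what the paper uses. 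Second, the least-developed step of your plan is the $(X,Z)$ update: to show $X^{k+1}=Y^{k+1}+O(1/\rho^k)$ you must bound the multiplier $\nu^k$ of the constraint $\diag(Z)=\mathbf 1$, whose defining equation contains the term $\rho^k(\mathbf 1-\diag(Y^{k+1}(Y^{k+1})^T))$; this is $O(1)$ only after you invoke $\diag(Z^{k+1})=\mathbf 1$ together with the $O(1/\rho^k)$ residuals from \eqref{eq:LambdaUpdXxy} and boundedness of $Y^{k+1}$, so that chain of estimates needs to be written out explicitly for the argument to close.
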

\begin{proof}
  See section \ref{sec:matrixproof} in the appendix.
\end{proof}


\begin{cor}
	If $r \geq \ceil{\sqrt{2n}}$ and the  stationary point of Algorithm \ref{a:matrix} converges to a second order critical point of \eqref{eq:FormulationXxy1}, then it is globally optimal for the convex relaxation of \eqref{eq:FormulationXxy0SymFact}~\cite{boumal2016non}.
\end{cor}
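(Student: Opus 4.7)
The plan is to reduce the corollary essentially to a direct invocation of the main theorem of Boumal et al.~\cite{boumal2016non}, after verifying that a second-order critical point of the expanded formulation \eqref{eq:FormulationXxy1} induces a second-order critical point of the factored SDP \eqref{eq:FormulationXxy0SymFact}. The first step is to exploit the output of Theorem \ref{lemma:LagDecentXxy}: at any limit point $(Z^*, X^*, Y^*)$, the coupling constraints $X^* = Y^*$ and $Z^* = X^*(Y^*)^T$ hold, so $Z^* = X^*(X^*)^T$ and the diagonal constraint $\diag(Z^*) = \mathbf{1}$ becomes $\diag(X^*(X^*)^T) = \mathbf{1}$. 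Substituting $Z = XX^T$ and $Y = X$ into the objective gives $\tr(CZ) = \tr(X^T C X)$, so the problem \eqref{eq:FormulationXxy1} collapses exactly to the FSDP \eqref{eq:FormulationXxy0SymFact} in the single variable $X$.

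Next I would show that the second-order criticality passes through this reduction. A second-order critical point of \eqref{eq:FormulationXxy1} satisfies the KKT conditions together with a nonnegativity condition on the reduced Hessian along feasible directions; restricting these to the subspace $\{X = Y, Z = XX^T\}$ yields precisely the first- and second-order optimality conditions of the FSDP in $X^*$. In particular, any direction $\dot X$ that is tangent to $\{X : \diag(XX^T) = \mathbf{1}\}$ at $X^*$ lifts to the direction $(\dot Z, \dot X, \dot Y) = (X^*\dot X^T + \dot X (X^*)^T, \dot X, \dot X)$ in the expanded space that respects the coupling constraints, and the curvature of the Lagrangian along such lifted directions reduces to the curvature of the FSDP Lagrangian. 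Thus $X^*$ is a second-order critical point of the FSDP.

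The final step is to apply~\cite[Theorem~2]{boumal2016non}: whenever $r(r+1)/2 > n$, every second-order critical point of the rank-$r$ Burer–Monteiro factorization is a global minimizer of the underlying SDR (with probability 1 under a generic perturbation argument, as discussed in that paper). The hypothesis $r \geq \lceil \sqrt{2n} \rceil$ implies $r^2 \geq 2n$, hence $r(r+1)/2 > n$, so the dimension condition is met. Therefore $Z^* = X^*(X^*)^T$ is feasible and optimal for the SDR; because the SDR is a lower bound on the FSDP value and this bound is attained, $X^*$ is a globally optimal solution of the factored SDP, which is what the corollary claims.

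The main obstacle I expect is the second step: carefully verifying that the second-order conditions on the augmented, bilinearly constrained formulation \eqref{eq:FormulationXxy1} truly imply the second-order conditions on the reduced FSDP, rather than some strictly weaker statement. The tangent-space computation at the coupled manifold $\{X=Y,\,Z=XX^T\}$ must be carried out explicitly, and one needs to confirm that the multipliers $\Lambda_1^*, \Lambda_2^*$ combine at the stationary point to produce exactly the multiplier for the $\diag(XX^T) = \mathbf{1}$ constraint that appears in the Boumal et al.\ analysis. Once this correspondence is established, the rest is a citation.
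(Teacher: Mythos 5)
Your proposal is correct and follows exactly the route the paper intends: the paper offers no written proof of this corollary beyond the citation to \cite{boumal2016non}, and your argument --- feasibility of the limit point collapses \eqref{eq:FormulationXxy1} to \eqref{eq:FormulationXxy0SymFact}, second-order criticality transfers along the diffeomorphism between the two feasible manifolds, and the dimension count $r \geq \ceil{\sqrt{2n}} \Rightarrow r(r+1)/2 > n$ activates the Boumal--Voroninski--Bandeira theorem --- is precisely the missing justification. The one step you rightly flag as delicate (matching the multipliers $\Lambda_1^*, \Lambda_2^*$ to the single multiplier for $\diag(XX^T)=\mathbf{1}$ and checking the Riemannian Hessian correspondence) is genuine work the paper omits entirely, and your treatment of it is sound.
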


Unfortunately, the extension of KKT stationary points to global minima is not yet known when $\frac{r(r+1)}{2} < n$ (\ie, $r = 1$). However, our empirical results suggest that even when $r = 1$, often a local solution to \eqref{eq:FormulationXxy1} well-approximates the global solution to \eqref{eq:main}.

\section{NUMERICAL RESULTS}

From the convergence proofs, we have shown that our algorithms converge to a KKT stability point, and if $r = \ceil{\sqrt{2n}}$, with high probability achieves the global solution of the SDR. In this section, we give empirical evidence that they are in addition good solutions to \eqref{eq:main}, our original combinatorial problem. We show this through three applications: community detection of the stochastic block model, MAX-CUT, and image segmentation.

We evaluate four methods for solving \eqref{eq:main}:
\begin{enumerate}
	\item SD: the solution to the SDR  rounded to a binary vector  using a  Goemans-Williamson style rounding \cite{goemans1995improved} technique;
	
	\item V: the binary vector reformulation  \eqref{eq:Formulation3} solved via ADMM;
	
	\item MR1: the matrix reformulation \eqref{eq:FormulationXxy1} with $r = 1$, solved via ADMM;
	\item MRR:  the matrix reformulation \eqref{eq:FormulationXxy1} with $r = \ceil{\sqrt{2n}}$, solved via ADMM, and rounded to a binary vector using a nonsymmetric version of the Goemans-Williamson style rounding \cite{goemans1995improved} technique.
\end{enumerate}

\paragraph{Rounding} For both the SDP and MRR methods, we need to round the solutions to a vector.
 For the SDP method, we first do an eigenvalue decomposition $X = Q\Lambda Q^T$ and form  a factor $F = Q\Lambda^{1/2}$ where the diagonal elements of $\Lambda$ are in decreasing magnitude order. Then we scan $k = 1,\hdots, n$ and find $x_{k, t} = \sign(F_k z_t)$ for trials $t = 1,\hdots  10$. Here, $F_k$ contain the first $k$ columns of $F$, and each element of $z_t$ is drawn i.i.d from a normal Gaussian distribution.
We keep $x_r = x_{k,t}$ that minimizes $x_r^TCx_r$.
For the MRR method, we repeat the procedure using a factor $F = U\Sigma^{1/2}$ where $X=U\Sigma V^T$ is the SVD of $X$.
For MR1 and V, we simply take $x_r = \sign(x)$  as the binary solution.

\paragraph{Computer information}
The following simulations are performed on a Dell Poweredge R715 with an 
AMD Opteron(tm) processor, with
64 GB of RAM and
24 cores. It is running 
Ubuntu 16.04 (Xenial)
with Matlab 2017a.

\subsection{Community detection}
In this section we evaluate $C$ as defined for community detection.
Figure \ref{f:sbm_evolution} gives a sample evolution for each method with $n = 2500$, $m = n/2$, and $p = 10q = 0.04$. Although the vector method is simple and has a fast per-iteration rate, it has significant initial overhead and slow overall progress. In comparison, the two matrix methods MR1 and MRR are significantly faster. The SDR method has a slow per iteration rate, but suggests good answers in only a few iterations. However, this plot does not take into account rounding overhead time (see figure \ref{f:sbm_runtime}), which is significant for both the SDR and MRR methods.  

\begin{figure}
	\begin{center}
	\includegraphics[width=3in]{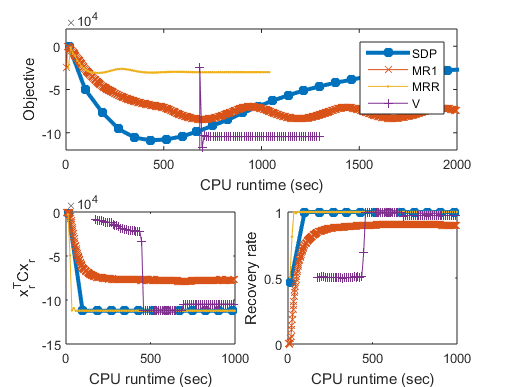}
	\end{center}
	\caption{Sample evolution for $n = 2500$, showing the objective value $\tr(CX)$, best objective value using $x_r = \sign(x)$, and recovery rate using $x_r$. }
	\label{f:sbm_evolution}
\end{figure}

\begin{figure}
\begin{center}
	\includegraphics[width=3in]{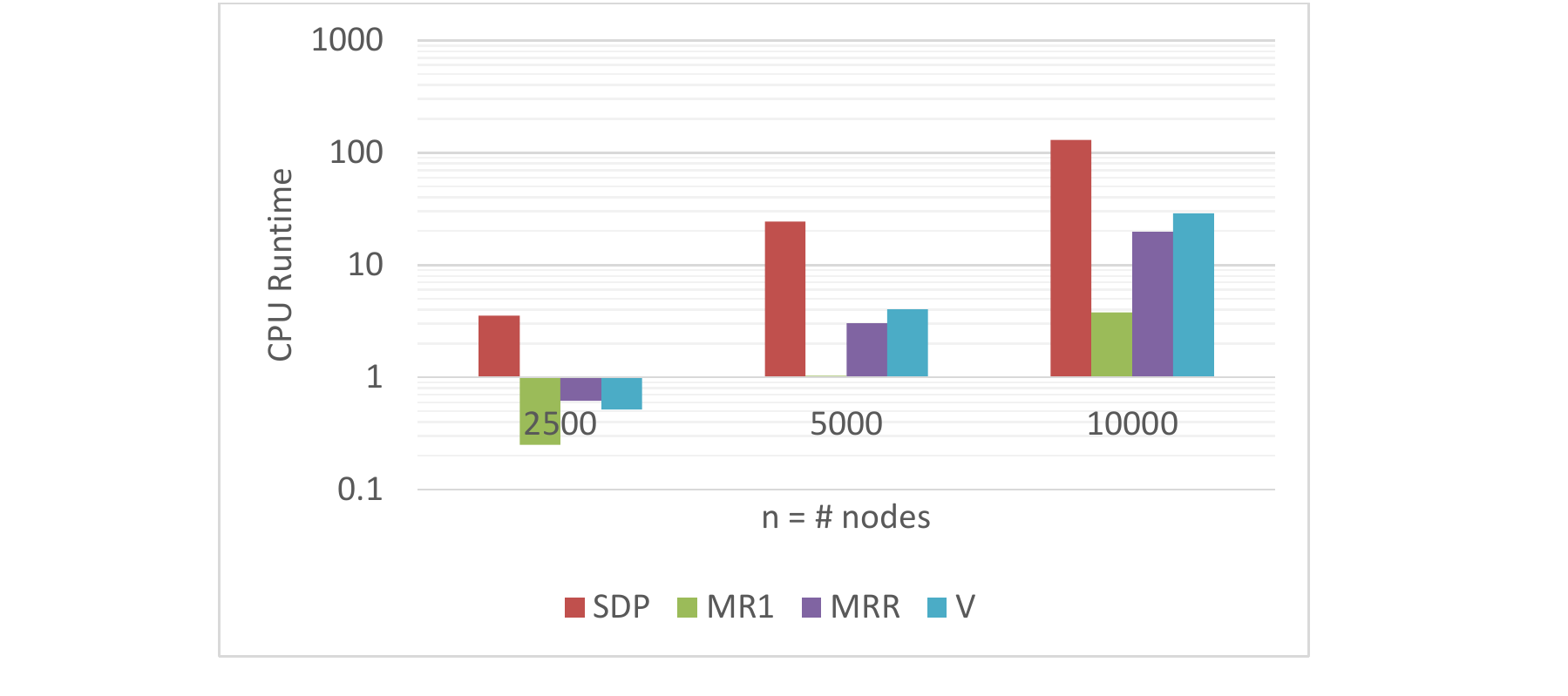}
	\includegraphics[width=3in]{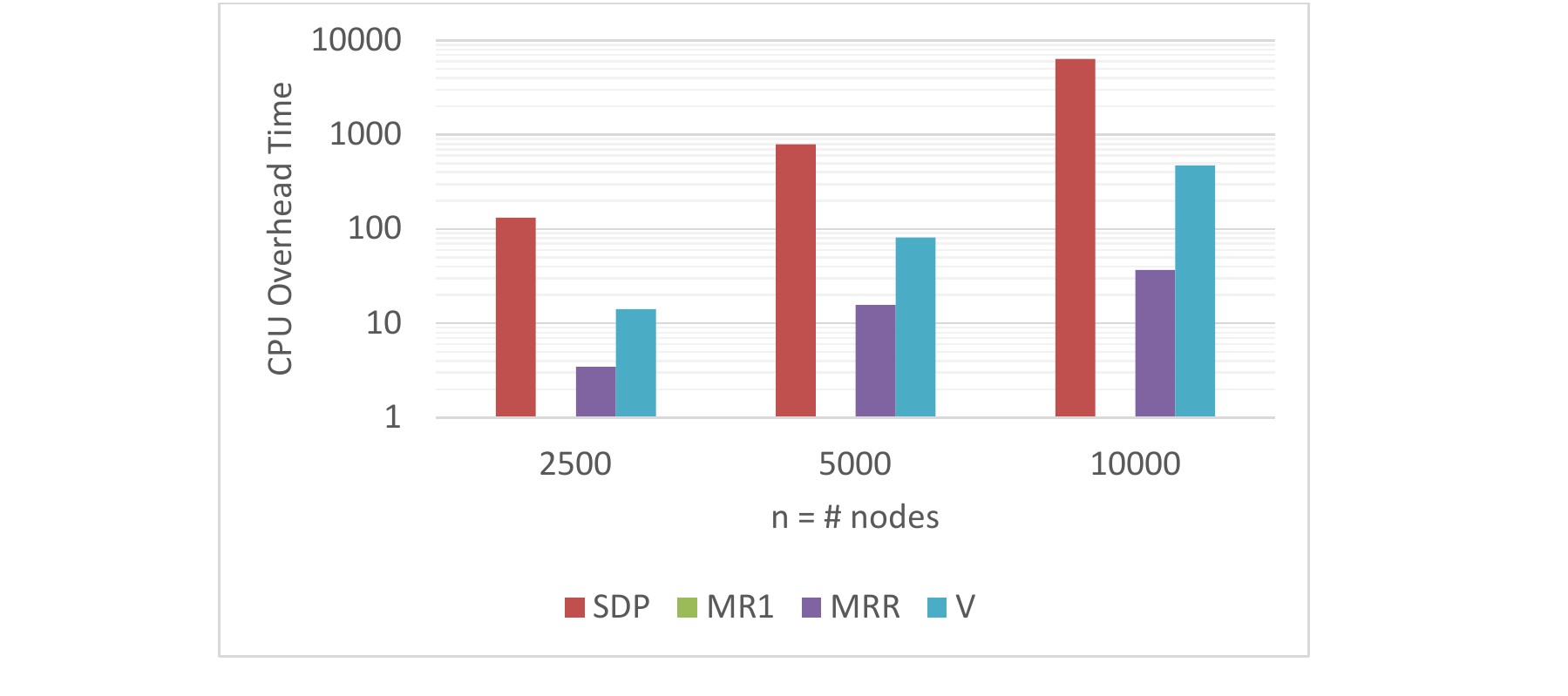}
	\end{center}
	\caption{Top: CPU per-iteration runtime for four methods. Bottom: CPU runtime for overhead operations (rounding for SDP and MRR and initial matrix inversion or SVD for V). MR1 gives a vector output and has no  additional overhead.
	}
	\label{f:sbm_runtime}
\end{figure}

Table \ref{t:sbm_recovery} gives the average recovery rate for each method on the community detection problem for the stochastic block model. The results are surprisingly consistent, and suggest that solving \eqref{eq:main} using our approaches all have great promise for general community detection problems.

	\begin{table}
		\begin{center}
			\begin{tabular}{ c c c | c c c c }\hline
				$n$ & $m$ & $p$   &S & V & MR1 & MRR   \\\hline\hline
				1000 & 100 & 0.1    &1& 1& 1 & 1  \\
				1000 & 500 & 0.1    &1&1 & 1 & 1  \\
				2500 & 250 & 0.04   &1& 1& 1 & 1  \\
				2500 & 1250 & 0.04  &1&1 & 1 & 1.00  \\
				5000 & 500 & 0.02   &1& 1 & 1 & 1.00  \\
				5000 & 2500 & 0.02  &1& 1 & 1 & 1.00 \\
				10000 & 1000 &0.01  &1& 1 & 1 & 1  \\
				10000 & 5000 &0.01  &1&1  &1  &1.00   \\\hline
			\end{tabular}
			\caption{\textbf{Correct community recovery rate for stochastic block model.} Result after 10 iterations for S, MR1, and MRR methods and 50 iterations for V, averaged over 10 trials for each method. $n = $ number of nodes in graph. $m = $ number of nodes in smaller of two communities. $p$ = percentage of  edges within communities, and $q = p/10$ the percentage of edges  between communities.  1 = perfect recovery, 1.00 = rounds to 1.}
		\end{center}
	\label{t:sbm_recovery}
\end{table}

\subsection{MAX-CUT}

Table 
\ref{t:maxcut}
gives the best MAX-CUT values using best-of-random-guesses and our approaches over four examples from the 
7th DIMACS Implementation Challenge in 2002.\footnote{See \texttt{http://dimacs.rutgers.edu/Workshops/7thchallenge/.} Problems downloaded from \texttt{http://www.optsicom.es/maxcut/}}
Our solutions are very close to the best known solutions.

	\begin{table}
		\begin{center}
			{\small
			\begin{tabular}{ c|c c c  c  }\hline
				&g3-15 & g3-8 & pm3-15-50&pm3-8-50\\\hline
				$n$&3375&512	&3375&512	\\
				\# edges &20250&3072&20250&3072\\
			sparsity &	0.18\% & 1.2\% & 0.18\% & 1.2\%\\\hline
			BK &281029888&41684814	&2964	&454\\
			R&12838418&4816306&	170		&62		\\
		MR1&	255681256	&36780180	&1990	&312\\	
			V&202052290 &8213762 &2016 &330\\\hline
			\end{tabular}}
		
		
	
			\caption{MAX-CUT values for graphs from the 7th DIMACS Challenge.
				BK = best known. R = best of 1000 random guesses. MR1 = matrix formulation, $r = 1$. V = vector formulation.}
			
			\label{t:maxcut}
		\end{center}
\end{table}

\subsection{Image segmentation}
Both community detection and MAX-CUT can be used in image segmentation, where each pixel is a node and the similarity between pixels form the weight of the edges. Generally, solving \eqref{eq:main} for this application is not preferred, since the number of pixels in even a moderately sized image is extremely large. However, because of our fast methods, we successfully performed image segmentation on several thumbnail-sized images, in figure \ref{f:imageseg}.

The $C$ matrix is composed as follows. For each pixel, we compose two feature vectors: $f_c^{ij}$ containing the RGB values and $f_p^{ij}$ containing the pixel location. Scaling $f_c^{ij}$ by some weight $c$, we form the concatenated feature vector $f^{ij} = [f_c^{ij}, cf_p^{ij}]$, and form the weighted adjacency matrix as the squared distance matrix between each feature vector $A_{(ij), (kl)} = \|f^{ij} - f^{kl}\|_2^2$.
For MAX-CUT, we again form $C = A - \Diag(A\mb 1)$ as before. For community detection, since we do not have exact $p$ and $q$ values, we use an approximation as $C = a\mb1 \mb1^T-A$ where $a = \frac{1}{n^2}\mb1^TA\mb1$ the mean value of $A$. Sweeping $C$ and $\rho_0$, we give the best qualitative result in figure \ref{f:imageseg}.

\begin{figure}[!h]
\begin{center}
	\includegraphics[width=3in]{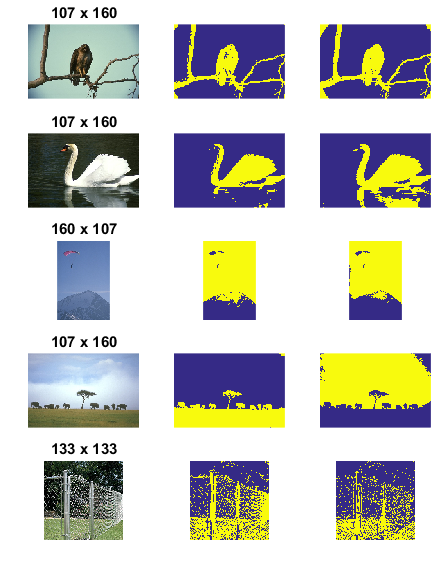}
	\end{center}
	\caption{Image segmentation. The center (right) column are the best MAX-CUT (community detection) results.}
	\label{f:imageseg}
\end{figure}

\section{CONCLUSION}

We present two methods for solving  quadratic combinatorial problems using ADMM on two reformulations. Though the problem has a nonconvex constraint, we give convergence results to KKT solutions under much milder conditions than previously shown. From this, we give empirical solutions to  several graph-based combinatorial problems, specifically MAX-CUT and community detection; both can can be used in additional downstream applications, like image segmentation.


\bibliographystyle{alpha}


\newcommand{\etalchar}[1]{$^{#1}$}

\begin{appendix}

\section{Convergence analysis for vector form}\label{sec:vectorconvergence}
We first prove the following lemma.

\begin{lem}\label{lemma:Lambda}
	If Assumption \ref{Assumption:Df} holds, for  two adjacent iterations of Algorithm \ref{a:vector} we have
	 \bea\label{eq:OptCond3}
	\|\mu^{k+1} - \mu^{k}\|_2^2 \le L_1^2 \|x^{k+1} - x^{k}\|_2^2.
	\eea
\end{lem}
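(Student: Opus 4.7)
The plan is to express $\mu^{k+1} - \mu^k$ purely in terms of $x^{k+1} - x^k$ by exploiting the first-order optimality condition of the $x$-subproblem, after which the $L_1$-smoothness of $x \mapsto x^T C x$ yields the desired bound immediately.

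First, I would write out the optimality condition for the $x$-update \eqref{eq:xnew}: since $x^{k+1}$ minimizes $x^TCx + \tfrac{\rho^k}{2}\|x - y^{k+1} + \mu^k/\rho^k\|_F^2$, we have
\begin{equation*}
2Cx^{k+1} + \mu^k + \rho^k(x^{k+1} - y^{k+1}) = 0.
\end{equation*}
Comparing with the dual update $\mu^{k+1} = \mu^k + \rho^k(x^{k+1} - y^{k+1})$, the $\rho^k(x^{k+1} - y^{k+1})$ terms cancel, so the key identity
\begin{equation*}
\mu^{k+1} = -2Cx^{k+1}
\end{equation*}
holds at every iteration $k \ge 1$. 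This is the crucial algebraic observation: the dual sequence is, up to sign, just the gradient of the objective evaluated at the primal iterate.

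Applying the identity at two consecutive indices and subtracting gives $\mu^{k+1} - \mu^k = -2C(x^{k+1} - x^k)$. The $L_1$-smoothness property from Definition~\ref{def:L} then yields
\begin{equation*}
\|\mu^{k+1} - \mu^k\|_2 = \|2Cx^{k+1} - 2Cx^k\|_2 \le L_1 \|x^{k+1} - x^k\|_2,
\end{equation*}
and squaring both sides produces \eqref{eq:OptCond3}.

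There is really no obstacle here; the whole argument is a two-line manipulation once one spots that the dual update and $x$-optimality together eliminate the penalty term. Assumption~\ref{Assumption:Df} is not used in this particular lemma (it will enter later when one has to lower-bound the Lagrangian), but its inclusion in the hypothesis is harmless. The only minor subtlety is that the identity $\mu^{k+1} = -2Cx^{k+1}$ needs $k \ge 1$ so that the optimality condition has already been enforced; for the bound between iterates $k$ and $k+1$ one uses the identity at both, which is valid for all $k \ge 1$.
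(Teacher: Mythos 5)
Your proof is correct and follows exactly the paper's own argument: combine the first-order optimality condition of the $x$-update with the dual update to obtain $\mu^{k+1} = -2Cx^{k+1}$, then invoke the $L_1$-smoothness of $x\mapsto x^TCx$ at two consecutive iterates. Your side remarks (that Assumption \ref{Assumption:Df} is not actually needed here, and that the identity requires the optimality condition to have been enforced) are accurate and do not change the substance.
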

\begin{proof}
	Recall the first order optimality conditions for $x$ in (\ref{eq:OptCond1})
	\bea\label{eq:OptCond0}
	2Cx^{k+1} + \mu^k + \rho^k(x^{k+1}-y^{k+1}) = 0.
	\eea
	Combining with (\ref{eq:LambdaUpd}), we get
	\bea\label{eq:OptCond2}
	2Cx^{k+1} + \mu^{k+1} = 0.
	\eea
	As (\ref{eq:OptCond2}) holds for each iteration $k$, combining it with the definition of $L_1$ proves the lemma.
\end{proof}

\textit{Remark}: We update $x$ after $y$ so that we can apply the optimality condition (\ref{eq:OptCond2}).

Next we will show that the augmented Lagrangian \eqref{eq:Formulation4} is monotonically decreasing and lower bounded.
\begin{lem}\label{lemma:auglag}
	The augmented Lagrangian \eqref{eq:Formulation4} is monotonically decreasing; that is,
		
	\begin{equation}\label{eq:D5}
	\mathcal{L}({y}^{k+1},{x}^{k+1};{\mu}^{k+1};\rho^{k+1}) - \mathcal{L}({y}^{k},{x}^{k};{\mu}^{k};\rho^{k})  
	\le c_1^k\|{x}^{k+1}-{x}^k\|_F^2.
	\end{equation}
	where $c_1^k = \frac{\rho^k-L_H}{2} - \frac{(\alpha+1)L_1^2}{2\rho^{k}} >0$.
\end{lem}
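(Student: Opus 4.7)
The plan is to split the one-step change of the augmented Lagrangian into three additive pieces corresponding to the $y$-update, the $x$-update, and the combined $(\mu,\rho)$-update, then bound each piece separately.

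First I would handle the $y$-update. Since $y^{k+1}$ is a global minimizer of $\mathcal{L}(x^{k},\,\cdot\,;\mu^{k};\rho^{k})$ over $\{-1,1\}^n$, and $y^{k}$ is feasible for that subproblem, we immediately get
\[
\mathcal{L}(y^{k+1},x^{k};\mu^{k};\rho^{k}) \leq \mathcal{L}(y^{k},x^{k};\mu^{k};\rho^{k}).
\]
No convexity is needed here; the nonconvex binary constraint is handled by simply invoking global optimality of the rounding step.

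Next, for the $x$-update, note that the objective in \eqref{eq:xnew} has Hessian $2C+\rho^{k}I$, which by Definition~\ref{Definition:L_H} satisfies $2C+\rho^{k}I \succeq (\rho^{k}-L_H)I$. With $\rho^{k} > L_H$, this makes the $x$-subproblem $(\rho^{k}-L_H)$-strongly convex, so that
\[
\mathcal{L}(y^{k+1},x^{k+1};\mu^{k};\rho^{k}) \leq \mathcal{L}(y^{k+1},x^{k};\mu^{k};\rho^{k}) - \tfrac{\rho^{k}-L_H}{2}\|x^{k+1}-x^{k}\|_F^{2}.
\]

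Third, I would compute the change from the dual and penalty updates exactly. A direct calculation using $\mu^{k+1}=\mu^{k}+\rho^{k}(x^{k+1}-y^{k+1})$ and $\rho^{k+1}=\alpha\rho^{k}$ gives
\[
\mathcal{L}(y^{k+1},x^{k+1};\mu^{k+1};\rho^{k+1}) - \mathcal{L}(y^{k+1},x^{k+1};\mu^{k};\rho^{k}) = \tfrac{(\alpha+1)\rho^{k}}{2}\|x^{k+1}-y^{k+1}\|_F^{2}.
\]
Substituting $\|x^{k+1}-y^{k+1}\|_F^{2} = \|\mu^{k+1}-\mu^{k}\|_F^{2}/(\rho^{k})^{2}$ and invoking Lemma~\ref{lemma:Lambda} to get $\|\mu^{k+1}-\mu^{k}\|_F^{2} \leq L_1^{2}\|x^{k+1}-x^{k}\|_F^{2}$ yields the upper bound $\tfrac{(\alpha+1)L_1^{2}}{2\rho^{k}}\|x^{k+1}-x^{k}\|_F^{2}$.

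Summing the three contributions telescopes to the target inequality with coefficient $-c_1^k = -\bigl(\tfrac{\rho^{k}-L_H}{2} - \tfrac{(\alpha+1)L_1^{2}}{2\rho^{k}}\bigr)$, and positivity of $c_1^k$ follows from the hypothesis $(\rho^{k})^{2}-L_H\rho^{k}-(\alpha+1)L_1^{2} > 0$. The main obstacle is really just Step~3: one must be careful to track both the $\mu$ increment \emph{and} the simultaneous $\rho$ increment in a single calculation, and then convert the resulting ``dual ascent'' term into a bound in $\|x^{k+1}-x^{k}\|_F^{2}$ via Lemma~\ref{lemma:Lambda}. The other two steps are routine once one notices that global optimality on the binary set is enough for the $y$-descent and that the lower bound on the Hessian of $x^{T}Cx$ is exactly what turns the $x$-subproblem into a strongly convex one.
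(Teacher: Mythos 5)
Your proposal is correct and follows essentially the same route as the paper's proof: the same three-way decomposition into $y$-update, $x$-update, and combined $(\mu,\rho)$-update, with the $x$-descent from $(\rho^k-L_H)$-strong convexity and the dual/penalty increment converted to $\frac{(\alpha+1)L_1^2}{2\rho^k}\|x^{k+1}-x^k\|_F^2$ via Lemma~\ref{lemma:Lambda}. The only difference is cosmetic (you express the third piece first in terms of $\|x^{k+1}-y^{k+1}\|_F^2$ before substituting), and your derived coefficient $-c_1^k$ matches the intended inequality as stated in Theorem~\ref{lemma:LagDecent}.
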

\begin{proof}
	For the update of $y$ in (\ref{eq:z1z2}), we have
	\bea\label{eq:D1}
		\mathcal{L}(y^{k+1},x^k;{\mu}^k;\rho^{k}) - \mathcal{L}({y}^{k},{x}^k;{\mu}^k;\rho^{k}) \le 0,
	\eea
	based on the fact that problem (\ref{eq:z1z2}) is globally minimized.
	Using $\{y, \mu, \rho\} = \{y^{k+1}, \mu^k, \rho^k\}$,  for the update of (\ref{eq:xnew}), we have
	\begin{eqnarray}
		\mathcal{L}({y},{x}^{k+1};{\mu};\rho) - \mathcal{L}({y},{x}^k;{\mu};\rho) 
		&\le& \langle \nabla_{{x}}\mL({y},{x}^{k+1};{\mu}; \rho),{x}^{k+1}-{x}^k\rangle  
		-\frac{\rho^k-L_H}{2}\|{x}^{k+1}-{x}^k\|_2^2  \nonumber\\
		&\le& -\frac{\rho^k-L_H}{2}\|{x}^{k+1}-{x}^k\|_2^2,\label{eq:D2}
	\end{eqnarray}
	where the first inequality follows that $\mL(y,{x};\mu;\rho)$ is strongly convex with respect to $x$ given $\rho > L_H$, and the second inequality follows from the optimality condition of (\ref{eq:xnew}).

	In addition, using $\{x,y\} = \{x^{k+1},y^{k+1}\}$ we have
	\begin{eqnarray}
	\mathcal{L}({y},{x};{\mu}^{k+1};\rho^{k+1}) - \mathcal{L}({y},{x};{\mu}^{k};\rho^{k})
	&=&\langle {\mu}^{k+1}-{\mu}^{k},{x} - {y}\rangle \nonumber + \frac{\rho^{k+1}-\rho^{k}}{2}\|{x} - {y}\|_F^2\\
	&=& \frac{\alpha+1}{2\rho^{k}}\|{\mu}^{k+1}-{\mu}^{k}\|_2^2 \nonumber\\
	\end{eqnarray}
	where the first equality follows the definition of $\mathcal{L}$ and the second follows (\ref{eq:LambdaUpd}).
	Combining with Lemma \eqref{lemma:Lambda}
	\begin{eqnarray}
		\mathcal{L}({y^{k+1}},{x^{k+1}};{\mu}^{k+1};\rho^{k+1}) - \mathcal{L}({y^{k+1}},{x^{k+1}};{\mu}^{k};\rho^{k}) 
	&\le& \frac{(\alpha+1)L_1^2}{2\rho^{k}}\|{x}^{k+1} - {x}^{k}\|_F^2,\label{eq:D3}
	\end{eqnarray}

	Moreover, it can be easily verified that
	\begin{eqnarray}
	\mathcal{L}(y^{k+1},x^{k+1};\mu^{k+1};\rho^{k+1}) - \mathcal{L}(y^{k},x^{k};\mu^{k};\rho^{k}) 
	&=&\mathcal{L}(y^{k+1},{x}^k;\mu^k;\rho^{k}) - \mathcal{L}(y^{k},x^{k};\mu^{k};\rho^{k}) \nonumber\\
	&&\;\;+\mathcal{L}(y^{k+1},x^{k+1};\mu^k;\rho^{k}) - \mathcal{L}(y^{k+1},x^{k};\mu^k;\rho^{k})\nonumber \\
	&&\;\;+\mathcal{L}(y^{k+1},x^{k+1};\mu^{k+1};\rho^{k+1}) \nonumber\\
	&&\;\;- \mathcal{L}(y^{k+1},x^{k+1};\mu^k;\rho^{k}).
	\label{eq:D4}
	\end{eqnarray}
	By incorporating (\ref{eq:D1}), (\ref{eq:D2}) and (\ref{eq:D3}), (\ref{eq:D4}) can be rewritten as
	
	\begin{multline}
	\mathcal{L}({y}^{k+1},{x}^{k+1};{\mu}^{k+1};\rho^{k+1}) - \mathcal{L}({y}^{k},{x}^{k};{\mu}^{k};\rho^{k})  \nonumber\\
	\le-\frac{\rho^{k}-L_H}{2}\|{x}^{k+1}-{x}^k\|_F^2 + \frac{(\alpha+1)L_1^2}{2\rho^{k}}\|{x}^{k+1} - {x}^{k}\|_F^2 \nonumber\\
	\le -\left(\frac{\rho^k-L_H}{2} - \frac{(\alpha+1)L_1^2}{2\rho^{k}}\right)\|{x}^{k+1}-{x}^k\|_F^2.
	\end{multline}

	With $(\rho^k)^2-L_H\rho^k - (\alpha+1)L_1^2>0,\rho^k>0$, we have $c_1 = \frac{\rho^k-L_H}{2} - \frac{(\alpha+1)L_1^2}{2\rho^{k}} >0$.

	\end{proof}
	
	\begin{lem}
	    If the objective of \eqref{eq:Formulation3} is lower-bounded over $x\in\{-1,1\}^n$ (assumption \eqref{Assumption:Df}), then the augmented Lagrangian \eqref{eq:Formulation4} is lower bounded.
	
	\end{lem}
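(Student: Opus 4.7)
The plan is to exploit the dual optimality identity $\mu^{k+1} = -2Cx^{k+1}$ established in Lemma \ref{lemma:Lambda} (equation (\ref{eq:OptCond2})) to eliminate the linear $\mu$-term in the augmented Lagrangian, and then to rewrite everything in terms of $y^{k+1}\in\{-1,1\}^n$ plus a residual controlled by the primal gap $d=x^{k+1}-y^{k+1}$. Substituting $\mu^{k+1}=-2Cx^{k+1}$ into the definition (\ref{eq:Formulation4}) gives
\[
\mathcal{L}(y^{k+1},x^{k+1};\mu^{k+1}) = (x^{k+1})^TCx^{k+1} - 2(x^{k+1})^TC(x^{k+1}-y^{k+1}) + \tfrac{\rho^{k+1}}{2}\|d\|_2^2.
\]
Writing $x^{k+1}=y^{k+1}+d$ and expanding all quadratic terms in the first two pieces, the cross terms cancel cleanly and what remains is
\[
\mathcal{L}(y^{k+1},x^{k+1};\mu^{k+1}) = (y^{k+1})^TCy^{k+1} - d^TCd + \tfrac{\rho^{k+1}}{2}\|d\|_2^2.
\]

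Next I would bound the $d$-dependent remainder from below. By Definition \ref{def:L}, $2C$ is $L_1$-Lipschitz as a linear operator, so $\|2C\|_{\text{op}}\le L_1$, and hence $|d^TCd|\le \tfrac{L_1}{2}\|d\|_2^2$. Therefore
\[
-d^TCd + \tfrac{\rho^{k+1}}{2}\|d\|_2^2 \ge \tfrac{\rho^{k+1}-L_1}{2}\|d\|_2^2 \ge 0,
\]
whenever $\rho^{k+1}\ge L_1$, which is guaranteed by the hypothesis $\rho^k>L_1$ of the theorem (and preserved under $\rho^{k+1}=\alpha\rho^k$ with $\alpha>1$).

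Finally, since $y^{k+1}\in\{-1,1\}^n$ by construction of the $y$-update (\ref{eq:z1z2}), Assumption \ref{Assumption:Df} furnishes a uniform lower bound $f_{\min}$ on $(y^{k+1})^TCy^{k+1}$. Combining this with the nonnegative residual above yields $\mathcal{L}(y^{k+1},x^{k+1};\mu^{k+1})\ge f_{\min}$ for every $k$, which is the claim. The only subtle step is the dual substitution: one must apply (\ref{eq:OptCond2}) at index $k+1$ rather than $k$ so that the $\mu$-term is eliminated exactly, and one must use the operator-norm bound coming from $L_1$-smoothness (not the Hessian bound $L_H$, which would only yield an upper bound on $-d^TCd$). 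Once these are in place the argument is essentially a one-line algebraic reduction.
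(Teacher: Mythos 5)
Your proof is correct and follows essentially the same route as the paper: both substitute the dual optimality identity $\mu = -2Cx$ from (\ref{eq:OptCond2}) to eliminate the linear term, then use $L_1$-smoothness together with $\rho > L_1$ to show the residual in $d = x - y$ is nonnegative, and finally invoke Assumption \ref{Assumption:Df} on $y \in \{-1,1\}^n$. The only cosmetic difference is that the paper invokes the descent lemma abstractly where you expand the quadratic explicitly; for $f(x) = x^TCx$ these are the same calculation.
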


	\begin{proof}
 For notation ease, we denote $f(x) = x^TCx$. From the $L_1$-Lipschitz continuity of  $\nabla_x f(x)$ , it follows that
	\bea\label{eq:LHUpp}
	f(y) \le f(x) + \langle \nabla f(x), y - x \rangle + \frac{L_1}{2}\|x - y\|_F^2
	\eea
	for any $x$ and $y$.
	By definition
	\begin{eqnarray}
	\mL({y}^{k},{x}^k; {\mu}^k;\rho^k) &=& f({x}^{k}) + \langle {\mu}^k,{x}^{k} - {y}^{k}\rangle
	 + \frac{\rho^k}{2}\|{x}^{k}-{y}^{k}\|_F^2 \nonumber \\
	&=&  f({x}^{k}) - \langle \nabla f(x^k),{x}^{k} - {y}^{k}\rangle 
	 + \frac{\rho^{k}}{2}\|{x}^{k}-{y}^{k}\|_F^2 \label{eq:CRCP92}\\
	&\ge&  f({y}^{k}) + \frac{\rho^{k}-L_1}{2}\|{x}^{k}-{y}^{k}\|_F^2 ,\label{eq:CRCP9}
	\end{eqnarray}
	where \eqref{eq:CRCP92} follows from (\ref{eq:OptCond2}) and \eqref{eq:CRCP9} follows from (\ref{eq:LHUpp}).
	From the boundedness of the objective (Assumption \ref{Assumption:Df}) and the assumption $\rho^k > L_1$, it follows that $\mL({y}^{k},{x}^k;{\mu}^k;\rho^k)$ is bounded below for all $k$.
\end{proof}

	Since the sequence $\{\mL({z}^{k},{x}^k;{\mu}^k)\}$ is monotonically decreasing and lower bounded, then the sequence $\{\mL({z}^{k},{x}^k;{\mu}^k)\}$ converges. Combining with Lemma \ref{lemma:Lambda} we have
	\bea
	\|{x}^{k+1} - {x}^{k}\| \to 0,
	\eea
	which indicates that the sequence $\{{x}^k\}$ converges to a limit point ($x^*$). Moreover, as $\{{x}^k\}$ converges, the sequence $\{{\mu}^{k}\}$ also converges to $\mu^*$ based on (\ref{eq:OptCond3}).
	From (\ref{eq:LambdaUpd}), ${\mu}^{k+1} - {\mu}^{k} = \rho^k ({x}^{k+1} - {y}^{k+1})$ and so  ${x}^{k+1} - {y}^{k+1} \to 0$; thus $\{{y}^{k}\}$ converges to a limit point ${y}^*$ and ${x}^*={y}^*$. Additionally, note that $x^* = y^*$ implies
	 $\nabla_x \mL(y^*,x^*;\mu^*;\rho) = \nabla_y \mL(y^*,x^*;\mu^*;\rho) = 0$ and $x^* = y^* \in \{-1,1\}$ are feasible.
Thus  Theorem \ref{lemma:LagDecent} is proven.

We now prove Corollary \ref{thm:Convergence}.
\begin{proof}
	i) At the accumulation point, the update of ${x}^*$ is expressed as
	\bea
	&{x}^* = {y}^{*} = \text{Proj}_{\{-1,1\}^n}({x}^{*}+\frac{u_i^*}{\rho^*}).
	\eea
	Since $x_i^* = \sign(\rho^*x_i^*+u_i^*)$, therefore
	\[
	x_i^*(\rho^*x_i^*+u_i^*) = \rho^*+x_i^*u_i^*\ge 0.
	\]
	
	ii) Given $x^* = y^*$, then from \eqref{eq:OptCond1} it follows that
	\bea\label{eq:z1z2Opt}
	\nabla_x f({x}^{*}) + {\mu}^*= 0,
	\eea
	or equivalently,
	\[
	x^* = \arg\min_x f(x) + x^T\mu^*.
	\]
	If, additionally, $\mu^* = 0$, then $x^*$ is also the minimizer of the unconstrained problem; that is
	\[
	\arg\min_x f(x) + x^T\mu^* = \arg\min_{x\in \{-1,1\}} f(x) = \arg\min_x f(x)
	\]
	suggesting that the combinatorial constraints $x\in \{-1,1\}$ are not necessary, and the convex relaxation is exact; therefore  $x^*$ is the global minimum of \eqref{eq:main}.
	
\end{proof}

\section{Convergence analysis for matrix form}\label{sec:matrixproof}

To simplify notation, we first collect the primal and dual variables $P^k = (Z,X,Y)^k$ and $D^k = (\Lambda_1,\Lambda_2)^k$.

\begin{lem}
	$\mL(P^k;D^k;\rho^{k})$ is bounded.
\end{lem}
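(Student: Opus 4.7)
The plan is to substitute out the primal coupling residuals in $\mL$ using the dual update \eqref{eq:LambdaUpdXxy}, so that every remaining term is either $\tr(CZ^k)$ or an expression purely in the (bounded) dual variables scaled by powers of $\rho^k$. First I would invert the dual update to obtain, for every $k \ge 1$,
\[
Z^k - X^k(Y^k)^T = \frac{\Lambda_1^k - \Lambda_1^{k-1}}{\rho^{k-1}}, \qquad X^k - Y^k = \frac{\Lambda_2^k - \Lambda_2^{k-1}}{\rho^{k-1}}.
\]
Plugging these two identities into the definition \eqref{eq:AugLag1} eliminates $X^k, Y^k$ and leaves $Z^k$ appearing only through $\tr(CZ^k)$, yielding
\begin{align*}
\mL(P^k; D^k; \rho^k) &= \tr(CZ^k) + \frac{\langle \Lambda_1^k, \Lambda_1^k - \Lambda_1^{k-1}\rangle + \langle \Lambda_2^k, \Lambda_2^k - \Lambda_2^{k-1}\rangle}{\rho^{k-1}}\\
&\quad + \frac{\rho^k}{2(\rho^{k-1})^2}\bigl(\|\Lambda_1^k - \Lambda_1^{k-1}\|_F^2 + \|\Lambda_2^k - \Lambda_2^{k-1}\|_F^2\bigr).
\end{align*}

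I would then bound each piece using Assumption \ref{Assumption:BoundedDualXxy} together with the hypotheses on $\{\rho^k\}$. The term $\tr(CZ^k)$ is bounded by assumption. Because $\{\Lambda_i^k\}$ is bounded in norm, the triangle inequality makes each increment $\|\Lambda_i^k - \Lambda_i^{k-1}\|_F$ uniformly bounded. The condition $\sum 1/\rho^k < \infty$ forces $\rho^k \to \infty$, so in particular $\sup_k 1/\rho^{k-1} < \infty$, which bounds the two linear (Cauchy--Schwarz) terms. The condition $\sum \rho^{k+1}/(\rho^k)^2 < \infty$ implies that the coefficient $\rho^k/(\rho^{k-1})^2$ is the general term of a convergent series, hence uniformly bounded, which controls the two quadratic penalty terms. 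Combining these estimates yields a uniform upper and lower bound on $\mL(P^k; D^k; \rho^k)$ for all $k \ge 1$; the single value at $k = 0$ is a fixed finite constant set by the initialization and does not affect uniform boundedness of the whole sequence.

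The main obstacle I anticipate is purely bookkeeping rather than analysis: one must be careful to track the index shift between $\rho^k$ (the current penalty) and $\rho^{k-1}$ (the penalty that appears in the previous dual update through which the residuals are being rewritten), and to verify that both ratios $1/\rho^{k-1}$ and $\rho^k/(\rho^{k-1})^2$ remain bounded under the stated conditions on $\{\rho^k\}$. Once this substitution and these two elementary ratio estimates are in place, the lemma follows immediately and the conclusion is an application of the triangle inequality and Cauchy--Schwarz.
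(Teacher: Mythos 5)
Your argument is correct, but it is a genuinely different route from the paper's. The paper proves the lemma by completing the square pointwise in the definition \eqref{eq:AugLag1}, writing
\[
\mL(P^k;D^k;\rho^k)=\tr(CZ^k)+\tfrac{\rho^k}{2}\|Z^k-X^k(Y^k)^T+\tfrac{\Lambda_1^k}{\rho^k}\|_F^2+\tfrac{\rho^k}{2}\|X^k-Y^k+\tfrac{\Lambda_2^k}{\rho^k}\|_F^2-\tfrac{1}{2\rho^k}\left(\|\Lambda_1^k\|_F^2+\|\Lambda_2^k\|_F^2\right),
\]
so that the two squared terms are nonnegative and only $\tr(CZ^k)$ and $\tfrac{1}{2\rho^k}\|\Lambda_i^k\|_F^2$ need to be controlled by Assumption \ref{Assumption:BoundedDualXxy}; this yields a lower bound only, which is all the subsequent telescoping argument in Theorem \ref{lemma:LagDecentXxy} actually uses, and it holds for \emph{any} point with bounded $\tr(CZ)$ and bounded multipliers, independently of the ADMM dynamics. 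You instead invert the dual update \eqref{eq:LambdaUpdXxy} to replace the primal residuals by dual increments over $\rho^{k-1}$, which buys you something strictly stronger --- a genuine two-sided bound matching the literal wording of the lemma, and en route the observation that the residuals themselves are $O(1/\rho^{k-1})$, anticipating the feasibility argument at the end of the theorem's proof --- but at the cost of invoking the iteration structure (so the identity is only valid for $k\ge 1$, which you correctly handle separately) and the ratio conditions on $\{\rho^k\}$, which the paper's one-line algebraic identity never needs. Both proofs are sound; yours is less economical for the purpose the lemma serves downstream but proves more.
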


\begin{proof}
Recall the definition of $\mL(P^k;D^k;\rho^{k})$

\begin{eqnarray}
\mL(P^k;D^k;\rho^{k}) &=& \tr(CZ^k) 
+ \frac{\rho^{k}}{2}\|Z^k - X^k(Y^k)^T + \frac{\Lambda_1^k}{\rho^k}\|_F^2 \nonumber\\
&& + \frac{\rho^k}{2}\|X^k-Y^k + \frac{\Lambda_2^k}{\rho^k}\|_F^2 
 - \frac{1}{2\rho^k}\left(\|\Lambda_1^k\|_F^2+\|\Lambda_2^k\|_F^2\right)\label{eq:Lbound}
 >-\infty
 \end{eqnarray}

where the inequality follows the boundedness of $\{\tr(CZ^k)\}$ and $\{\Lambda_1^k,\Lambda_2^k\}$ in Assumption \ref{Assumption:BoundedDualXxy}.
\end{proof}

\begin{lem}\label{lem:hessianbound}
	\[
	\nabla^2 \mL_Y \succeq \rho^k I
	\]
	and
		\[
	\nabla^2 \mL_{(X,Z)} \succeq \rho^k\left(1- \frac{\sqrt{\lambda_N^2+4\lambda_N}-\lambda_N}{2}\right)I.
	\]
\end{lem}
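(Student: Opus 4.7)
For the $Y$-block I will compute the Hessian directly. With $(X, Z, \Lambda_1, \Lambda_2)$ fixed, the terms $\langle \Lambda_2, -Y\rangle$ and $\langle \Lambda_1, -XY^T\rangle$ are linear in $Y$ and drop out at second order, so the Hessian comes entirely from $\frac{\rho}{2}\|X-Y\|_F^2$ and $\frac{\rho}{2}\|Z-XY^T\|_F^2$. Expanding each in a direction $H$ yields the quadratic form $\rho(\|H\|_F^2 + \|HX^T\|_F^2)$, i.e., the symmetric linear map $H \mapsto \rho H(I+X^TX)$, whose eigenvalues are $\rho(1+\lambda_i(X^TX)) \ge \rho$. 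This gives $\nabla^2 \mL_Y \succeq \rho I$ directly.

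For the $(X,Z)$-block the same two penalty terms collapse into a single clean quadratic form
\[
Q(H_X, H_Z) \;=\; \rho \|H_X\|_F^2 + \rho \|H_Z - H_X Y^T\|_F^2 \;=\; \rho\,\langle (H_X, H_Z),\, T^*T\, (H_X, H_Z)\rangle,
\]
where $T = \bigl(\begin{smallmatrix} I & 0 \\ -R_Y & I \end{smallmatrix}\bigr)$ and $R_Y : H_X \mapsto H_X Y^T$. I will then diagonalise $R_Y^* R_Y : H_X \mapsto H_X Y^TY$ via the SVD of $Y$. Each nonzero singular value $\sigma_i$ of $Y$ (so $\sigma_i^2 = \lambda_i(Y^TY)$) contributes a two-dimensional $T^*T$-invariant subspace spanned by an eigenvector $u$ of $R_Y^*R_Y$ (living in the $H_X$-factor) and its unit image $v = R_Y u/\sigma_i$ (in the $H_Z$-factor), on which $T^*T$ acts as
\[
M_\sigma \;=\; \begin{pmatrix} 1+\sigma^2 & -\sigma \\ -\sigma & 1 \end{pmatrix}, \qquad \det M_\sigma = 1, \quad \operatorname{tr} M_\sigma = 2+\sigma^2.
\]
On the complementary directions (the kernel of $R_Y$ in the $H_X$-factor and the cokernel in the $H_Z$-factor) $T^*T$ acts as the identity. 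The smallest eigenvalue of $M_\sigma$ is $1 - \tfrac{1}{2}(\sqrt{\sigma^4+4\sigma^2}-\sigma^2)$, which is strictly decreasing in $\sigma$, so the global minimum of $T^*T$ is attained at the largest singular value, giving exactly $1 - \tfrac{1}{2}(\sqrt{\lambda_N^2 + 4\lambda_N} - \lambda_N)$ with $\lambda_N := \lambda_{\max}(Y^TY)$, matching the stated constant.

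The main obstacle I foresee is packaging the off-diagonal coupling $-2\rho\langle H_Z, H_XY^T\rangle$ cleanly into the $T^*T$ form and then verifying that the ambient constraints on $Z$ — symmetry together with $\diag(Z)=\mathbf{1}$, the latter pinning the tangent direction to $\{\diag(H_Z)=0\}$ — do not degrade the bound. This is handled by observing that $Q$ is a bound on the full unconstrained Hessian on $\reals^{n\times r}\oplus\reals^{n\times n}$, and restricting to any subspace (such as $\symm^n$, or the diagonal-free tangent space) can only increase the minimum eigenvalue, so the inequality $\nabla^2\mL_{(X,Z)} \succeq \rho(1-\tau)I$ descends to the constrained problem without extra work.
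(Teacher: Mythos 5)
Your proof is correct and arrives at exactly the paper's constant, but the $(X,Z)$ half takes a genuinely different route. The paper vectorizes, writes the Hessian as the block matrix $\rho\bigl[\begin{smallmatrix} I+NN^T & -N\\ -N^T & I\end{smallmatrix}\bigr]$ with $N=\blkdiag(Y^T,\dots)$, and extracts the smallest eigenvalue by asking for the largest $\sigma$ keeping the shifted matrix PSD via a Schur complement, which reduces to locating the roots of the scalar quadratic $(1-\sigma)^2+\lambda_N(1-\sigma)-\lambda_N$ and arguing which root is admissible. You instead factor the quadratic form as $T^*T$ with $T=\bigl(\begin{smallmatrix} I & 0\\ -R_Y & I\end{smallmatrix}\bigr)$ and block-diagonalize it along the SVD of $Y$ into $2\times 2$ blocks $M_\sigma$ with $\det M_\sigma=1$ and $\operatorname{tr} M_\sigma=2+\sigma^2$; this hands you the entire spectrum explicitly (the eigenvalue $1$ on $\ker R_Y$ and its cokernel, and the pair $\lambda_\pm(M_{\sigma_i})$ for each singular value), makes strict positive definiteness immediate from $\det M_\sigma=1$, and shows by monotonicity in $\sigma$ that the minimum sits at $\sigma_{\max}(Y)$, reproducing $1-\tfrac{1}{2}(\sqrt{\lambda_N^2+4\lambda_N}-\lambda_N)$. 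Your version is arguably more transparent and avoids the paper's slightly delicate root-selection step; the paper's version is shorter to state and does not require identifying the invariant subspaces. Your closing remark about the constraint $\diag(Z)=\mathbf{1}$ is extra relative to the lemma as stated (the paper bounds the unconstrained Hessian and invokes strong convexity on the affine feasible set later), but the observation that restricting to a subspace can only raise the minimum eigenvalue of the restricted quadratic form is sound and harmless. The $Y$-block argument is essentially identical to the paper's, modulo vectorization.
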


\begin{proof}
	
		Given the definition of $\mL$, we can see that the Hessian
		\[
		\nabla^2 \mL_Y = \rho^k\left(M+   I\right) \succeq \rho^k I
		\]
		where
		\[
		M = \blkdiag((X^k)^T(X^k),(X^k)^T(X^k),...)\succeq 0.
		\]
		
		For $(X,Z)$, we have
		\[\nabla^2_{(X,Z)}{\mL}_k = \rho^k\begin{bmatrix}
		I + NN^T & -N \\
		-N^T & I
		\end{bmatrix}
		\]
		where 
		\[
		N = \blkdiag((Y^{k+1})^T,\ldots,(Y^{k+1})^T )\in\reals^{nr\times n^2}.
		\]
		Note that for block diagonal matrices, $\|N\|_2 = \|Y^{k+1}\|_2$.
		Since $I\succ \zeros$ and its Schur complement $(I + NN^T) - NN^T \succ \zeros$, then  $\nabla^2_{(X,Z)}\tilde{\mL}_k \succ \zeros$ and equivalently $\lambda_{\min}(\nabla^2_{(X,Z)}{\mL}_k) > 0$.
		
		To find the smallest eigenvalue $\lambda_{\min}(\nabla^2_{(X,Z)}{\mL}_k)$, it suffices to find the largest $\sigma > 0$ such that
		\begin{equation}
		H_2 = (\rho^k)^{-1}\nabla^2_{(X,Z)}\tilde{\mL}_k - \sigma I 
		= \begin{bmatrix}
		(1-\sigma)I + NN^T & -N \\
		-N^T & (1-\sigma)I
		\end{bmatrix}
		\succeq\zeros.
		\end{equation}
		Using the same Schur Complement trick, we want to find the largest $\sigma > 0$ where $		(1-\sigma)I \succeq 0$ and
		\[H_3 = (1-\sigma)I + NN^T(1-(1-\sigma)^{-1})) \succeq 0.\]
		
		Since this implies $1-\sigma > 0$, then together with $\sigma>0$, it must be that $1-(1-\sigma)^{-1} \ < 0$.
		
		Therefore, defining $\lambda_N = \|Y^{k+1}\|^2_2$,
		\[\lambda_{\min}(H_3) = (1-\sigma) + \lambda_N(1-(1-\sigma)^{-1}).\]
		We can see that $(1-\sigma)\lambda_{\min}(H_3)$ is a convex function in $(1-\sigma)$, with two zeros at
		\[1-\sigma =  \frac{\pm\sqrt{\lambda_N^2+4\lambda_N}-\lambda_N}{2}.\]
		In between the two roots, $\lambda_{\min}H_3 <0$. Since the smaller root cannot satisfy $1-\sigma > 0$, we choose
	  \[\sigma_{\max} = 1- \frac{\sqrt{\lambda_N^2+4\lambda_N}-\lambda_N}{2} > 0\]
	 	as the largest feasible $\sigma$ that maintains $\lambda_{\min}(H_3)\geq 0$.
		
	
		As a result,
		\begin{eqnarray*}
			\lambda_{\min}(\nabla^2_{(X,Z)}\mL) &=& \rho^k\sigma_{\max}\nonumber\\
			&=&\rho^k\left(1- \frac{\sqrt{\lambda_N^2+4\lambda_N}-\lambda_N}{2}\right).
		\end{eqnarray*}

\end{proof}

We now prove Thm. \ref{lemma:LagDecentXxy}.

\begin{proof}
For the update of $Y$ in (\ref{eq:xnewXxy}), taking $\{D, \rho\} = \{D^k, \rho^k\}$,  we have

\begin{eqnarray}
\mathcal{L}(Z^{k},X^{k},Y^{k+1};D^k;\rho^k) - \mathcal{L}(P^k;D^k;\rho^k)
&\le& \langle \nabla_{Y}\mL(Z^{k},X^{k},Y^{k+1};D^k;\rho^k),{Y}^{k+1}-{Y}^k\rangle \nonumber\\
&&\quad -\frac{\lambda_{\min}(\nabla^2_{{Y}}\bar{\mL}_k)}{2}\|{Y}^{k+1}-{Y}^k\|_F^2  \nonumber\\
&\le& -\frac{\rho^{k}}{2}\|{Y}^{k+1}-{Y}^k\|_F^2 \label{eq:D2Xxy}
\end{eqnarray}
with $\nabla^2_{{Y}}\bar{\mL}_k = \nabla^2_{\text{vec}(Y)}\mL(Z^{k},X^{k},Y^{K+1};D^k;\rho^{k}) \succeq \rho^{k}I$.

which follows from the $\rho^k$-strong convexity of $\mL$ with respect to $Y$, and the optimality of $Y^{k+1}$.


For the update of $(Z,X)$ in (\ref{eq:xnewXxy}), we have
\begin{eqnarray}
\mathcal{L}(P^{k+1}; D^k;\rho^{k}) - \mathcal{L}(Z^{k},X^{k},Y^{k+1};D^k;\rho^{k})
&\le&\langle \nabla_{{Z}}\mL(P^{k+1};D^k;\rho^{k}),{Z}^{k+1}-{Z}^k\rangle 
 + \langle \nabla_{{X}}\mL(P^{k+1};D^k;\rho^{k}),{X}^{k+1}-{X}^k\rangle\nonumber\\
&& -\frac{\lambda_{\min}(\nabla^2_{(X,Z)}\tilde{\mL}_k)}{2}\big(\|{Z}^{k+1}-{Z}^k\|_F^2 
 +\|{X}^{k+1}-{X}^k\|_F^2\big)\nonumber\\
&\le&  -\frac{\lambda_{\min}(\nabla^2_{(X,Z)}\tilde{\mL}_k)}{2}\big(\|{Z}^{k+1}-{Z}^k\|_F^2
 + \|{X}^{k+1}-{X}^k\|_F^2\big),\label{eq:D1Xxy}
\end{eqnarray}
where the first inequality follows from the strong convexity of $\mL(Z,{X},Y^k;\Lambda_1^k,\Lambda_2^k)$ with respect to $(X,Z)$ with $\nabla^2_{(X,Z)}\tilde{\mL}_k = \nabla^2_{\text{vec}(X,Z)}\mL(P^{k+1};D^k;\rho^{k})$. In addition, the second inequality follows the optimality condition of (\ref{eq:z1z2Xxy}). Note that $\lambda_{\min}(\nabla^2_{(X,Z)}\tilde{\mL}) > 0$ given $\mL(Z,{X},Y^k;\Lambda_1^k,\Lambda_2^k)$ is strongly convex with regard to $(X,Z)$.
%
%

In addition for the update of the dual variables and the penalty coefficient, we have
\begin{eqnarray}
\mathcal{L}(P^{k+1};  D^{k+1};\rho^{k+1}) - \mathcal{L}(P^{k+1};D^{k};\rho^{k})
&=& \langle {\Lambda_1}^{k+1}-{\Lambda_1}^{k},Z^{k+1}-X^{k+1}(Y^{k+1})^T\rangle \nonumber\\
 &&+ \langle {\Lambda_2}^{k+1}-{\Lambda_2}^{k},X^{k+1}-Y^{k+1}\rangle \nonumber\\
 &&+\frac{\rho^{k+1}-\rho^k}{2}(\|Z^{k+1}-X^{k+1}(Y^{k+1})^T\|_F^2) \nonumber\\
&&+\frac{\rho^{k+1}-\rho^k}{2}(\|X^{k+1} - Y^{k+1}\|_F^2) \nonumber\\
&=&  \frac{\rho^{k+1}+\rho^k}{2(\rho^k)^2}\big(\|{\Lambda_1}^{k+1}-{\Lambda_1}^{k}\|_F^2  + \|{\Lambda_2}^{k+1}-{\Lambda_2}^{k}\|_F^2 \big)\label{eq:D3Xxy}
\end{eqnarray}
where the first equality follows the definition of $\mathcal{L}$, the second follows (\ref{eq:LambdaUpdXxy}).
Moreover, it can be easily verified that
\begin{multline}
\mathcal{L}(P^{k+1};D^{k+1};\rho^{k+1}) - \mathcal{L}(P^{k};D^{k};\rho^k) \\
=\mathcal{L}(Z^{k},X^{k},Y^{k+1};D^k;\rho^k) - \mathcal{L}(P^k;D^k;\rho^k) \\
+\mathcal{L}(P^{k+1};D^k;\rho^k) - \mathcal{L}(Z^{k},X^{k},Y^{k+1};D^k;\rho^k) \\
+\mathcal{L}(P^{k+1};D^{k+1};\rho^{k+1}) - \mathcal{L}(P^{k+1};D^{k};\rho^k).\label{eq:D4Xxy}
\end{multline}
By incorporating (\ref{eq:D1Xxy}), (\ref{eq:D2Xxy}) and (\ref{eq:D3Xxy}), (\ref{eq:D4Xxy}) can be rewritten as
\begin{multline}
\mathcal{L}(P^{k+1};D^{k+1};\rho^{k+1}) - \mathcal{L}(P^{k};D^{k};\rho^k) \\
\le-c_1^k\|{Z}^{k+1}-{Z}^k\|_F^2-c_2^k\|{X}^{k+1}-{X}^k\|_F^2\\
-c_3^k\|{Y}^{k+1}-{Y}^k\|_F^2\\
+\frac{\rho^{k+1}+\rho^k}{2(\rho^k)^2}\big(\|{\Lambda_1}^{k+1}-{\Lambda_1}^{k}\|_F^2 +
 \|{\Lambda_2}^{k+1}-{\Lambda_2}^{k}\|_F^2 \big)\label{eq:D5Xxy}.
\end{multline}
with $c_1^k = c_2^k = \frac{\lambda_{\min}(\nabla^2_{(Z,X)}\tilde{\mL}_k)}{2} > 0$ as given in Lemma \ref{lem:hessianbound} and $c_3^k = \frac{\rho^{k}}{2} > 0$.

By telescoping, the summation
\begin{eqnarray}\label{eq:D6Xxy}
\mL(P^K;D^K;\rho^K) - \mL(P^0;D^0;\rho^0)
&=&
\sum_{k=0}^{K-1}\mL(P^{k+1};D^{k+1};\rho^{k+1}) - \mL(P^k;D^k;\rho^k) \nonumber\\
&\leq&\sum_{k=0}^{K-1}\frac{\rho^{k+1}+\rho^k}{2(\rho^k)^2}\bigg(\|\Lambda_1^{k+1} - \Lambda_1^k\|_F^2 + \|\Lambda_2^{k+1} - \Lambda_2^k\|_F^2\bigg) \nonumber\\
&&-\sum_{k=0}^{K-1}c^k\bigg(\|{Z}^{k+1}-{Z}^k\|_F^2+\|{X}^{k+1}-{X}^k\|_F^2+\|{Y}^{k+1}-{Y}^k\|_F^2 \bigg),\nonumber
\end{eqnarray}
with $c^k = \min(c_1^k, c_2^k, c_3^k) > 0$.

For the first term,
\begin{eqnarray*}
	0&\leq& \sum_{k=0}^{K-1}\frac{\rho^{k+1}+\rho^k}{2(\rho^k)^2}\bigg(\|\Lambda_1^{k+1} - \Lambda_1^k\|_F^2 + \|\Lambda_2^{k+1} - \Lambda_2^k\|_F^2\bigg) \\
	&\leq& 4\sum_{k=0}^{K-1}\frac{\rho^{k+1}+\rho^k}{(\rho^k)^2}  B
\end{eqnarray*}


where from Assumption \ref{Assumption:BoundedDualXxy} we have that $\{\Lambda_1^k,\Lambda_2^k\}$ is bounded; that is there exists $B < \infty$ where $B \geq \max\{\|\Lambda_1^k\|_F^2,\|\Lambda_2^k\|_F^2\}$ for all $k$. Since additionally
 $\sum \frac{\rho^{k+1}+\rho^k}{2(\rho^k)^2} \le \sum\frac{\rho^{k+1}}{(\rho^k)^2} < +\infty$, then
 \begin{eqnarray*}
 0&\leq& \sum_{k=0}^{K-1}\frac{\rho^{k+1}+\rho^k}{2(\rho^k)^2}\left(\|\Lambda_1^{k+1} - \Lambda_1^k\|_F^2 + \|\Lambda_2^{k+1} - \Lambda_2^k\|_F^2\right)\\
  &\leq& +\infty.
 \end{eqnarray*}

Since $\mL(P^k;D^k;\rho^k)$ is bounded, then it follows  that

\begin{equation}
0\leq \sum_{k=0}^{K-1}c^k\bigg(\|{Z}^{k+1}-{Z}^k\|_F^2+\|{X}^{k+1}-{X}^k\|_F^2
+\|{Y}^{k+1}-{Y}^k\|_F^2 \bigg)\leq +\infty.
\end{equation}
Since additionally $\sum_k \rho_k = +\infty$, this
 immediately yields ${Z}^{k+1}-{Z}^k\to 0, {X}^{k+1}-{X}^k\to 0, {Y}^{k+1}-{Y}^k\to 0$.

 Moreover,
 \[
 \sum_{k=1}^\infty \frac{1}{\rho^k} < \infty \iff \frac{1}{\rho^k}\to 0  \iff \rho^k\to \infty.
 \]
 Combined with
  Assumption \ref{Assumption:BoundedDualXxy}, this implies that
 \[
 Z^{k+1}-X^{k+1}(Y^{k+1})^T = \frac{1}{\rho^k}(\Lambda_1^{k+1} - \Lambda_1^k)\to 0,
 \]
 \[
 X^{k+1} - Y^{k+1} = \frac{1}{\rho^k}(\Lambda_2^{k+1} - \Lambda_2^k) \to 0.
 \]
 Therefore the limit points $X^*, Y^*$, and $Z^*$ are all feasible, and simply checking the first optimality condition will verify that this accumulation point is a stationary point of (\ref{eq:FormulationXxy1}). The proof is complete.

\end{proof}

\end{appendix}

\end{document}